\newtheorem*{mainthm}{Theorem A}
\newcommand{\R}{\mathbb{R}}
\newcommand{\N}{\mathbb{N}}
\newcommand{\C}{\mathbb{C}}
\newcommand{\Z}{\mathbb{Z}}
\newcommand{\calC}{\mathcal{C}}
\newcommand{\la}{\lambda}
\newcommand{\wh}[1]{\widehat{#1}}
\newcommand{\Aut}{\operatorname{Aut}}
\newtheorem{theorem}{Theorem}[section]
\newtheorem{lemma}[theorem]{Lemma}
\newtheorem{corollary}[theorem]{Corollary}
\newtheorem{proposition}[theorem]{Proposition}
\newtheorem*{problem}{Problem}
\newtheorem{example}[theorem]{Example}
\newtheorem*{question}{Question}
\newtheorem{definition}[theorem]{Definition}
\newcommand{\calX}{\mathcal{X}}
\def\R{\mathbb{R}}
\def\Z{\mathbb{Z}}
\def\N{\mathbb{N}}
\def\C{\mathbb{C}}
\def \supp{\mathrm{supp}}
\def \dG{\, {\rm d}m_{G}}
\keywords{Phase retrieval, LCA groups, completeness, uniqueness sets, short-time Fourier transform, Paley-Wiener spaces.}
\subjclass[2020]{43A70, 42C30, 94A20}
\begin{document}

\title{Phase retrieval from short-time Fourier transform in LCA groups}

\author[N. Accommazzo]{Natalia Accomazzo}
\author[D. Carando]{Daniel Carando}
\author[R. Nores]{Rocío Nores}
\author[V. Paternostro]{Victoria Paternostro}
\author[S. Velazquez]{Sebastián Velazquez}

\address{Natalia Accomazzo\\
Departamento de Matem\'atica
\\Facultad de Ciencias Exactas y Naturales\\ Universidad de Buenos Aires
\\ Pabellón I, Ciudad Universitaria \\
(1428) Buenos Aires,   Argentina.}
\email{naccomazzo@dm.uba.ar}

\address{Daniel Carando\\
Departamento de Matem\'atica
\\Facultad de Ciencias Exactas y Naturales\\ Universidad de Buenos Aires
and IMAS (UBA--CONICET)
\\ Pabellón I, Ciudad Universitaria \\
(1428) Buenos Aires,   Argentina.}
\email{dcarando@dm.uba.ar}

\address{Rocío Nores\\
Departamento de Matem\'atica
\\Facultad de Ciencias Exactas y Naturales\\ Universidad de Buenos Aires
and IMAS (UBA--CONICET)
\\ Pabellón I, Ciudad Universitaria \\
(1428) Buenos Aires,   Argentina.}
\email{rnores@dm.uba.ar}

\address{Victoria Paternostro\\
Departamento de Matem\'atica
\\Facultad de Ciencias Exactas y Naturales\\ Universidad de Buenos Aires
and IMAS (UBA--CONICET)
\\ Pabellón I, Ciudad Universitaria \\
(1428) Buenos Aires,   Argentina.}
\email{vpater@dm.uba.ar}

\address{Sebastián Velazquez \\
Department of Mathematics\\ King's College London\\ Strand\\
London WC2R 2LS\\ United Kingdom.}
\email{sebastian.velazquez@kcl.ac.uk}

\begin{abstract}
    We study the short-time Fourier transform phase retrieval problem in locally compact abelian groups. Using probabilistic methods, we show that for a large class of groups $G$ and compact subsets $K\subseteq G$ there exists a window function and a uniformly discrete set in $G\times \wh{G}$ allowing phase retrieval in $L^2(K)$. We also study the obstructions for STFT phase retrieval on $L^2(G)$, motivating the restriction to compactly supported function spaces.
\end{abstract}

\maketitle

\section{Introduction}
The phase retrieval problem arises naturally in different applications, such as crystallography and signal processing. Regardless of its various implementations, this topic has become a very active area of research, receiving significant attention from diverse areas like complex analysis, harmonic analysis and representation theory.

This paper is dedicated to the study of the short-time Fourier transform (STFT) phase retrieval problem in locally compact abelian groups. Readers who are interested in its applications to ptychography and other related areas are referred to \cite{ptychography,FaSt20, GrKoRa20}. For a locally compact abelian group $G$, we will write $\wh{G}$ for its dual group, which is also locally compact and abelian.
The short time Fourier transform with respect to a window function $g$ is the operator $V_g: L^2(G)\rightarrow L^2(G\times \wh{G})$ satisfying
$$ V_gf(x,\xi)= \int_G f(t)\overline{g(t-x)}\,\overline{ \langle t,\xi\rangle} \dG(t) =\langle f, M_\xi T_xg\rangle_{L^2(G)} $$
for every $(x,\xi)\in G\times\wh{G}$. When dealing with the real-world applications of this theory, one is restricted by the fact that measuring devices are often only able to take phaseless samples. In our formulation, this translates to the problem of recovering a function $f$
from the values $|V_gf(S)|$ for some (computable) set $S\subseteq G\times \wh{G}$. Of course, the best possible outcome is to be able to identify $f$ up to multiplication by scalars of modulus one (this is, a \emph{global phase}). The hypothesis of $f$ being compactly supported is often harmless, although quite convenient for tackling the issue. With this in mind, our STFT phase retrieval problem can be stated as follows:

\begin{problem}
    Let $G$ be a locally compact abelian group and $K\subseteq G$ be a compact set. Find a window function $g$, together with small sets $\Lambda\subseteq G$ and $\Gamma\subseteq \wh{G}$ such that every $f\in L^2(K)$ is  uniquely determined by $|V_gf(\Lambda\times \Gamma)|$ up to a global phase. When this holds, it is said the triple $(g,\Lambda, \Gamma)$ does phase retrieval in $L^2(K)$.
\end{problem}

This problem has received a considerable amount of attention, although almost exclusively in the case where $G=\R^d$ \cite{PR4,PR3,PR2,  PR1}. In \cite{Grohsetal}, it is shown that a triple $(g,\Lambda, \Gamma)$ does phase retrieval in $L^2(K)$ for a compact $K$ provided that some systems of translations related to $g$ are complete in the set $\calC(K)$ of continuous functions supported in $K$, and $\Gamma$ is a uniqueness set for $PW_K(\R^d)$. In particular, they are able to provide window functions $g$ that do phase retrieval when paired with lattices $\Gamma,\Lambda\subseteq \R^d$ (see Theorem \ref{thm:phaseret} and Theorem \ref{window Rd} below).

To our best knowledge, it is still unknown whether STFT phase retrieval is possible in more general groups. We give a positive answer for a large class of groups and compact sets. Our methods are of probabilistic nature and are inspired by the connection with the completeness properties of translates of window functions outlined in \cite{Grohsetal}. Before stating our results, let us first briefly discuss the structure of locally compact abelian (LCA) groups. By \cite[Theorem 4.2.1]{DeitmarEchterhoff}, every LCA group is of the form
$$G\simeq \R^d\times G_0$$
for $d\geq0$ and a LCA group $G_0$ having an open compact subgroup $H$ (here $\simeq$ means a  topological and algebraic  isomorphism.). Observe that if $G_0$ is not compact then of course $H$ is a proper subgroup. A particularly nice class of groups of this kind are \emph{dually expansible} groups. These are defined in Definition \ref{defexp} and have been an active subject of research in the last decades, see for instance \cite{BeBe04,BE04}.
A subset $S\subseteq G$ is said to be \emph{uniformly discrete} if there exists an open neighborhood $U\ni 0$ such that $(s+U) \cap S=\{s\}$ for every $s\in S$.  We are now ready to state our main contribution to the problem:

\begin{mainthm} \label{thm-main}
    Let $G\simeq \R^d \times G_1\times\dots\times G_n$ be a locally compact abelian group, where $d\geq 0$ and each $G_i$ is not compact and contains a proper open compact subgroup $H_i$. Let $K\subseteq G$ be a compact set. Suppose that for every $1\leq i \leq n$ we have either
    \begin{enumerate}
        \item[(1)] $G_i$ is dually expansible or discrete, or
        \item[(2)] the $i$-th projection of $K$ on $G_i$ is contained in $H_i$.
    \end{enumerate}
    Then there exists  $g\in \calC(G)$ and a uniformly discrete set $\Lambda\times \Gamma\subseteq G\times \wh{G}$ such that every $f\in L^2(K)$ is uniquely determined by $|V_gf(\Lambda\times \Gamma)|$ up to a global phase.
\end{mainthm}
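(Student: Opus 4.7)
The plan is to lift the framework of \cite{Grohsetal} to arbitrary LCA groups and then exploit the product decomposition $G\simeq\R^d\times G_1\times\dots\times G_n$ to assemble $(g,\Lambda,\Gamma)$ factor by factor. The criterion quoted as Theorem~\ref{thm:phaseret} reduces STFT phase retrieval on $L^2(K)$ to two conditions: completeness of a system of translates of $g$ in an appropriate continuous function space on $K$, and $\Gamma$ being a uniqueness set for the Paley--Wiener space $PW_K(\wh{G})$. The first step is to verify that this reduction carries over verbatim to any LCA group, using Plancherel's theorem and Fourier inversion on $G$ in place of their Euclidean counterparts.

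Embedding $K\subseteq K_0\times K_1\times\dots\times K_n$ with each $K_i$ compact, I would look for $g=g_0\otimes g_1\otimes\cdots\otimes g_n$, $\Lambda=\Lambda_0\times\cdots\times\Lambda_n$ and $\Gamma=\Gamma_0\times\cdots\times\Gamma_n$. Uniform separation is stable under finite Cartesian products and both completeness and the uniqueness-set property tensorize, so it is enough to build $(g_i,\Lambda_i,\Gamma_i)$ on each factor. The $\R^d$-component is handled directly by Theorem~\ref{window Rd}. For each $G_i$ satisfying condition~(2), the choice $g_i=\mathbbm{1}_{H_i}$ is continuous because $H_i$ is open and compact; translates of $g_i$ along a uniformly separated set $\Lambda_i\subseteq H_i$ span $\mathcal{C}(H_i)\supseteq\mathcal{C}(K_i)$, and lifting the discrete dual $\wh{H_i}\simeq\wh{G_i}/H_i^\perp$ to a uniformly separated $\Gamma_i\subseteq\wh{G_i}$ furnishes a uniqueness set for $PW_{K_i}(\wh{G_i})$.

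For each $G_i$ under condition~(1) the probabilistic construction is needed. If $G_i$ is discrete, $\wh{G_i}$ is compact, a generic finite sample produces a uniqueness set, and a random finitely supported $g_i$ yields completeness by a standard Baire category argument. If $G_i$ is dually expansible, Definition~\ref{defexp} supplies an expansive filtration of $\wh{G_i}$ by compact neighborhoods; picking one point per cell builds a uniformly separated uniqueness set $\Gamma_i$ for $PW_{K_i}(\wh{G_i})$, and the window $g_i\in\mathcal{C}(G_i)$ is drawn at random (e.g.\ as a random series on a suitable adapted basis) so that with positive probability its $\Lambda_i$-translates are complete in $\mathcal{C}(K_i)$. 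The main obstacle, and the true content of the proof, is precisely this last step: in the abstract dually expansible setting one cannot appeal to Gaussian densities on $\R^d$, so one must design a probability measure on $\mathcal{C}(G_i)$ adapted to the expansive structure and prove, via a zero--one law or an explicit non-degeneracy estimate, that the set of windows with complete $\Lambda_i$-orbit has positive measure. Once this is achieved, assembling the tensor product and invoking the lifted criterion finishes the proof.
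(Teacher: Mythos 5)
Your overall skeleton matches the paper: reduce via the generalisation of the Grohs--Liehr--Shafkulovska criterion (Theorem~\ref{thm:phaseret}), replace $K$ by a product $\prod_i \pi_i(K)$, build $(g_i,\Lambda_i,\Gamma_i)$ on each factor, and tensorize completeness and uniqueness sets. However, there are two genuine gaps. First, your treatment of condition~(2) is wrong as stated: if $g_i=\chi_{H_i}$ and $\Lambda_i\subseteq H_i$, then $T_xg_i=\chi_{H_i+x}=\chi_{H_i}$ for every $x\in H_i$ (and likewise $(g_i)_s=\overline{g_i}\,T_sg_i=\chi_{H_i}$ for $s\in H_i$), so the system of translates spans only the constants and is nowhere near complete in $\calC(H_i)$. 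The paper instead translates along a \emph{section of $G_i/H_i$} (points in distinct cosets), and the window is not an indicator but the random function \eqref{eq:randomg}, equal on each coset $H_i-x_k$ to a random linear combination of characters of $H_i$ with Steinhaus coefficients; completeness of $\{T_{x_k}g_s\}_k$ in $\calC(H_i)$ is then Proposition~\ref{prop:completenessbyprobability}. Also note that what must be complete is the system of translates of $g_s=\overline{g}\,T_sg$ for \emph{all} $s\in K-K$ simultaneously, not of $g$ itself; your sketch elides this.

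Second, you correctly identify the probabilistic completeness statement in the dually expansible (and discrete) case as ``the true content of the proof,'' but you do not supply it: ``design a probability measure \dots and prove, via a zero--one law or an explicit non-degeneracy estimate, that the set of windows with complete $\Lambda_i$-orbit has positive measure'' is a restatement of the goal, not an argument. The paper's actual mechanism is concrete: for discrete $G_i$ one assigns i.i.d.\ Gaussians to the points of $G_i$ and shows that suitable finite matrices $\big(T_{y_{j_\ell}}g_s(y_k)\big)$ have independent absolutely continuous entries, hence are a.s.\ invertible (Theorem~\ref{prop:complete-discrete}); for a proper open compact subgroup one uses the law of large numbers to show that Ces\`aro averages $\frac1N\sum_{k\le N} T_{x_k}g_s\,\overline{\lambda_{0,k}}\lambda_{-\eta_0,k}$ converge uniformly to a nonzero multiple of each character $\eta_0$ of $H$, which places every character in the closed span (Lemmas~\ref{lemma:averages}--\ref{lemma:averages0} and Proposition~\ref{prop:completenessbyprobability}); the dually expansible case then follows by replacing $H$ with $A^{n_0}H\supseteq K-K$. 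Without these constructions your proof does not close.
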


Additionally, we establish that, when extra structure is imposed on $\Gamma$, phase retrieval cannot be achieved on the entire space $L^2(G)$ for general LCA groups, motivating the restriction to compactly supported functions in Theorem A.
More precisely, we show that if $\Gamma \subseteq \widehat{G}$ is such that $\overline{\langle \Gamma \rangle} \subsetneq \widehat{G}$, then for any window function $g\in L^2(G)$ there exist functions $f_1$ and $f_2$ with $|V_g f_1(G\times\Gamma)| = |V_g f_2(G\times\Gamma)|$, while $f_1$ and $f_2$ are not related by a global phase. This was motivated by \cite{PR1,PR4}, where the authors tackle this issue in the case $G=\mathbb R^d$ and obtain analogous results when $\Gamma$ is a lattice or a set derived from a lattice.

The paper is organized as follows. In Section \ref{sec:premilinaries}, we introduce some preliminary concepts and notation that will be used throughout the paper. Section \ref{sec:main-tools} serves a dual purpose. On one hand, we construct uniqueness sets for Paley-Wiener spaces. On the other hand, using probabilistic methods, we design functions and uniformly discrete sets such that the system of translates of these functions along the sets is complete. Both topics are essential for proving Theorem A. In Section \ref{sec:proofs}, we combine the results from Section \ref{sec:main-tools} to provide a  proof of Theorem A. Finally, in Section \ref{sec:nonunique} we prove that whenever $\Gamma \subseteq \widehat{G}$ satisfies $\overline{\langle \Gamma \rangle} \subsetneq \widehat{G}$, no choice of $g\in L^2(G)$ allows $(g,G, \Gamma)$ to perform  phase retrieval in $L^2(G)$.

\section{Preliminaries}\label{sec:premilinaries}

In this section we will review some basic definitions and facts on the topics treated in this article. We refer to \cite{BE04,DeitmarEchterhoff,Rudin} for further details and complete proofs.

We will understand a \emph{locally compact abelian group} $G$ - \emph{LCA group} for short - to be an abelian group $(G,+)$ which is also a locally compact topological Hausdorff space such that both multiplication and inversion are homeomorphisms. The Haar measure on $G$ will be denoted by $m_G$. Along the rest of the article, every group homomorphism between topological groups is assumed to be continuous. We now state the so-called  \emph{first structure theorem} for LCA groups.

\begin{theorem}\label{structure}\cite[Theorem 4.2.1]{DeitmarEchterhoff}
    Let $G$ be an LCA group. Then $G\simeq \R^d\times G_0$, where $d$ is a non-negative integer and $G_0$ is an LCA group containing an open compact subgroup.
  \end{theorem}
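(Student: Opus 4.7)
The plan is to reduce to the compactly generated case via an open subgroup of $G$, apply the structure theorem for compactly generated LCA groups, and then peel off the Euclidean factor by means of a splitting argument for closed vector subgroups.

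First, I would produce an open, compactly generated subgroup $H \leq G$. Since $G$ is locally compact, there exists a symmetric compact neighborhood $U$ of the identity; the subgroup $H := \bigcup_{n \geq 1} nU$ is open (it contains $U$), closed (every open subgroup of a topological group is closed), and compactly generated by $U$. The quotient $G/H$ is therefore discrete, so any structural decomposition of $H$ should lift to $G$ with only a discrete obstruction to handle.

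Next, I would invoke the structure theorem for compactly generated LCA groups (classical; see e.g.\ Hewitt--Ross, or the reference \cite[Ch.~4]{DeitmarEchterhoff} already used in the excerpt) to obtain an isomorphism
$$
H \;\simeq\; \R^a \times \Z^b \times K,
$$
for some non-negative integers $a,b$ and a compact subgroup $K \leq H$. Set $d := a$ and let $V \leq H$ denote the closed subgroup corresponding to the $\R^a$-factor; then $V$ is a closed vector subgroup of $G$, topologically isomorphic to $\R^d$.

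The decisive step is to upgrade the internal splitting $H \simeq V \oplus (\Z^b \times K)$ to a topological direct sum decomposition of the ambient group $G$. For this I would use the fact that a closed vector subgroup of a Hausdorff LCA group is always a topological direct summand: equivalently, the divisibility (and topological injectivity) of $\R^d$ among Hausdorff abelian topological groups yields a continuous retraction $\rho : G \to V$ extending the identity on $V$. Setting $G_0 := \ker \rho$, one gets $G \simeq V \times G_0 \simeq \R^d \times G_0$ topologically. To finish, I need to produce an open compact subgroup of $G_0$: since $V \leq H$, the decomposition gives $H = V \oplus (H \cap G_0)$, so $H \cap G_0 \simeq \Z^b \times K$ is open in $G_0$ (as $H$ is open in $G$), and within this the subgroup $\{0\} \times K$ is open and compact. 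Its image is then an open compact subgroup of $G_0$, as required.

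The main obstacle is exactly the construction of the retraction $\rho : G \to V$; purely algebraic divisibility is not enough, and one needs a continuous splitting. A standard route is to pass to Pontryagin duals, where the problem turns into constructing a suitable closed subgroup of $\widehat{G}$ using the structure of the discrete group $\widehat{G}/(\widehat{G})_e$, and then transport back. The compactly generated structure theorem invoked in the second step is itself non-trivial but is a classical and well-documented result that I would treat as a black box.
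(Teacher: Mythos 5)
The paper offers no proof of this statement at all: it is quoted verbatim from \cite[Theorem 4.2.1]{DeitmarEchterhoff}, so your attempt can only be measured against the classical argument in the literature. Your skeleton is exactly that classical argument: pass to the open (hence closed) compactly generated subgroup $H=\bigcup_{n\geq 1} nU$ generated by a symmetric compact neighbourhood $U$ of $0$, decompose $H\simeq \R^a\times\Z^b\times K$ by the compactly generated structure theorem (a legitimate black box, proved independently of and prior to the general theorem), and split the vector factor off the ambient group. Your bookkeeping at the end is also correct: granted the splitting $G\simeq V\times G_0$, one gets $H=V\oplus(H\cap G_0)$, so $H\cap G_0\simeq \Z^b\times K$ is open in $G_0$ and contains a compact open subgroup.

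The gap is in the step you yourself call decisive, and it is genuine. The fact you invoke --- every closed vector subgroup of an LCA group is a topological direct summand --- is true, but it is deeper than the theorem being proved: the known proofs (via the characterization of injective objects in the category of LCA groups, due to Moskowitz) themselves rest on the structure theorem, so quoting it here is circular, or at best moves the entire content of Theorem \ref{structure} into the black box. It is also not a soft categorical fact: your parenthetical appeal to ``topological injectivity of $\R^d$ among Hausdorff abelian topological groups'' is false in that generality (for $0<p<1$ the group $L^{p}[0,1]$ is a Hausdorff abelian topological group containing $\R$ as a closed subgroup, yet it admits no nonzero continuous homomorphism to $\R$, so that copy of $\R$ is not a summand). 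Your duality fallback does not repair this either: $\widehat{G}/(\widehat{G})_e$ is totally disconnected but in general \emph{not} discrete (take $\widehat{G}\simeq\Q_p$, whose identity component is trivial), so the proposed reduction does not get off the ground as stated. What you are missing is that the classical proof never extends the identity from the closed, non-open subgroup $V$; it extends a projection from an \emph{open} subgroup, where continuity is automatic. Concretely, $E:=\R^a\times\{0\}\times K$ is open in $H$ (because $\Z^b$ is discrete), hence open in $G$. The projection $\pi\colon E\to\R^a$ is a continuous homomorphism; since $\R^a$ is divisible, Baer's criterion extends it to a purely algebraic homomorphism $\tilde\pi\colon G\to\R^a$, which is continuous simply because it agrees with $\pi$ on the open subgroup $E$. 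Identifying $\R^a$ with $V\subseteq E$, one has $\tilde\pi|_V=\mathrm{id}$, so $g\mapsto(\tilde\pi(g),\,g-\tilde\pi(g))$ is a topological isomorphism $G\simeq\R^a\times G_0$ with $G_0=\ker\tilde\pi$, and $G_0\cap E=\{0\}\times\{0\}\times K$ is a compact open subgroup of $G_0$. With this replacement your outline closes, the only remaining black box being the compactly generated structure theorem, which is logically prior and safe to cite; this is exactly how the proof in the cited reference proceeds.
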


Given an LCA group $G$, we denote by $\wh G$ its Pontryagin dual. Since the dual of $\wh{G}$ is naturally isomorphic to the original group, for  $\xi\in\wh G$ and $x\in G$ we write $\langle x,\xi\rangle $ to indicate either the character $\xi$ applied to $x$ (i.e. $\xi(x)$) or the character  $x$ applied to $\xi$. For a subgroup  $H\subseteq G$, its \emph{annihilator} is denoted by $H^\perp$ and is defined as
$H^\perp=\{ \xi\in\wh G:\, \langle h,\xi\rangle=1 \,\forall \,h\in H\}$.

For a closed subgroup $H\subseteq G$, the dual group  of the quotient group $G/H$, $\widehat{G/H}$,  is  isomorphic to $H^\perp$, and $\wh H$ is isomorphic to $\wh G/H^\perp$.
When $H$ is an open and compact subgroup, then so is
$H^\perp\subseteq\wh G$. As a consequence,   both quotients $G/H$ and
$\wh G/H^\perp$ are discrete.

A \emph{section} of the quotient group $G/H$ is a \emph{measurable} set of representatives. This is, a measurable set which contains exactly one element of each coset. Notice that if $C\subseteq G$ is a section for the quotient $G/H$ and  $x\in C$, we have $\{x\}=C\cap (H+x)$. In particular, if $H$ is an open subgroup, then $C$ is a discrete subset of $G$.

For a fixed $A\in \Aut(G)$, we consider a new measure $\mu_A$ defined by
$ \mu_A(U)=m_G(AU)$ where $U$ is a Borel set of $G$. This application is a non-zero Haar measure on $G$. Therefore, there is a unique positive number $|A|$, the so-called {\it modulus of $A$}, such that $\mu_A=|A|m_G$.
The \emph{adjoint operator} $A^*:\wh{G}\to \wh{G}$ of $A$ is the automorphism satisfying $\langle Ax,\xi\rangle=\langle x, A^*\xi\rangle$ for all $x\in G$ and $\xi\in\wh G$. Straightforward computation shows that this assignment satisfies $|A^*|=|A|$ and $(A^nH)^\perp=A^{*-n}H^\perp$ for every subgroup $H$ of $G$ and every $n\in\mathbb Z$.

\begin{definition}\cite[Definition 2.5]{BeBe04} \label{defexp}
    Let $G$ be an LCA group, $H\subseteq G$ an open compact subgroup and $A\in \Aut(G)$. We say that $A$ is \emph{expansive} with respect to $H$ if the following hold true:
    \begin{enumerate}
        \item[(1)] $H\subsetneq AH$;
        \item[(2)] $\bigcap_{n\leq0} A^nH =\{0\}$.
    \end{enumerate}
    When $H$ is fixed or clear from the context, we simply say that $A$ is expansive. A group is said to be \emph{expansible} if it admits an expansive automorphism. If instead $\wh{G}$ is an expansible group then we  say that $G$ is a \emph{dually expansible} group.
\end{definition}

Expansiveness can also be characterized by the action of the adjoint automorphism, as the next lemma shows.

\begin{lemma}\cite[Lemma 2.6]{BeBe04} Let $G$ be an LCA group with an open compact subgroup $H\subseteq G$, and let $A\in \Aut(G)$.
Then $H\subseteq AH$ (resp. $H\subsetneq AH$) if and only if $H^{\perp}\subseteq A^*H^{\perp}$ (resp. $H^{\perp}\subsetneq A^*H^{\perp}$).
Moreover, if $H\subseteq AH$ then
\begin{align} \label{exp}\bigcup_{n\geq0}A^nH=G\iff \bigcap_{n\leq0}A^{*n}H^{\perp}=\{0\}.\end{align}
\end{lemma}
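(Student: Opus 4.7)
The plan rests on two properties of the annihilator operation. First, for any closed subgroup $L\subseteq G$ and any $A\in \Aut(G)$, the identity $(AL)^\perp = A^{*-1}L^\perp$ holds, as an immediate consequence of $\langle Ax,\xi\rangle=\langle x,A^*\xi\rangle$ (this is already noted in the excerpt for iterates of $H$). Second, Pontryagin duality gives that $L\mapsto L^\perp$ is an inclusion-reversing bijection between closed subgroups of $G$ and of $\wh G$, satisfying $L^{\perp\perp}=L$.

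For the first equivalence, I would start from $H\subseteq AH$ and take annihilators to obtain $(AH)^\perp\subseteq H^\perp$, that is $A^{*-1}H^\perp\subseteq H^\perp$; applying $A^*$ gives $H^\perp\subseteq A^*H^\perp$. Each step is reversible, and since the annihilator is a bijection on closed subgroups, strict inclusions are preserved in both directions. This handles the two cases $H\subseteq AH$ and $H\subsetneq AH$ simultaneously.

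For the second statement, assume $H\subseteq AH$, so $\{A^nH\}_{n\geq 0}$ is an increasing chain. First I would note that $L:=\bigcup_{n\geq 0}A^nH$ is a subgroup (the union is directed, so closure under sums and inverses reduces to closure within some $A^nH$), open (each $A^nH$ is open), and hence closed, since every open subgroup of a topological group is closed. Then
$$L^\perp \;=\; \Big(\bigcup_{n\geq 0}A^nH\Big)^{\!\perp} \;=\; \bigcap_{n\geq 0}(A^nH)^\perp \;=\; \bigcap_{n\geq 0}A^{*-n}H^\perp \;=\; \bigcap_{m\leq 0}A^{*m}H^\perp,$$
using the standard identity $\big(\bigcup L_i\big)^\perp=\bigcap L_i^\perp$. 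Since $L$ is closed, Pontryagin duality yields $L=G$ if and only if $L^\perp=\{0\}$, which is exactly \eqref{exp}.

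The argument is essentially a dictionary between statements about $H,A$ in $G$ and their mirror images $H^\perp,A^*$ in $\wh G$, mediated by the annihilator. The only point requiring genuine care is checking that $L$ is closed, so that the equivalence $L=G\Leftrightarrow L^\perp=\{0\}$ really applies; this is where the elementary topological fact about open subgroups being closed enters, together with the nestedness provided by the hypothesis $H\subseteq AH$.
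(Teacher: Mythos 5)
Your argument is correct. Note that the paper itself gives no proof of this lemma---it is imported verbatim as \cite[Lemma 2.6]{BeBe04}---so there is no internal proof to compare against; your duality argument (annihilators reverse inclusions and are involutive on closed subgroups, $(AL)^\perp=A^{*-1}L^\perp$, and the union $\bigcup_{n\geq0}A^nH$ is an open hence closed subgroup) is the standard one and is essentially what the cited reference does. The only points a referee might ask you to make explicit are that $AH$ is closed (it is compact, being the image of the compact set $H$ under the homeomorphism $A$), so that $ (AH)^{\perp\perp}=AH$ is legitimate, and that $H\subseteq AH$ propagates to $A^nH\subseteq A^{n+1}H$ by applying $A^n$; both are immediate and you implicitly use them correctly.
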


{In particular, by the above lemma, in the case where $H\subsetneq AH$  the group $G$ is dually expansive if and only if $G=\bigcup_{n\geq0}A^nH$.}

For $f\in L^1(G)$, its \emph{Fourier transform} is defined as
$$\widehat{f}(\xi):=\int_Gf(x)\overline{\langle x,\xi\rangle}\dG(x), \quad \xi\in \widehat{G}.
$$
As usual, this operator can be extended to an isomorphism from $L^2(G)$ onto $L^2(\widehat{G})$.
For an LCA group $G$ with an open and compact subgroup $H$, we consider the
following normalization of the Haar measures involved: we fix $m_G$ such that $m_G(H)=1$ and $m_{\wh G}$ such that $m_{\wh G}(H^\perp)=1$. This choice guarantees that the Fourier transform between $L^2(G)$ and $L^2(\wh G)$ is an isometry (see \cite[Comment (31.1)]{HR70}).

\begin{definition}
    Let $G$ be an LCA group and $\Omega \subseteq \widehat{G}$ a Borel set. The \emph{Paley-Wiener space associated to} $\Omega$ is
    $$PW_\Omega(G):=\{f \in L^2(G): \supp(\widehat{f})\subseteq \Omega\}.$$
\end{definition}

    It is worth pointing out that when $\Omega$ is a Borel set of finite measure, every element of $PW_\Omega$ is also a continuous function in $G$. Indeed, if $f\in PW_\Omega$, then we have $\widehat{f} \in  L^2(\Omega) \subset L^1(\Omega)$. Since $\widehat{f}$ is integrable, $f$ must be continuous.

\begin{definition}
    Let $G$ be an LCA group and $\calX$ a set of functions defined on $G$. A set  $\Upsilon\subseteq G $ is a uniqueness set for $\calX$   if
    $$f\in \calX\textrm{ and }f(\upsilon)=0\,\,\textrm{for all }\upsilon\in \Upsilon\,\, \Longrightarrow f=0.$$
\end{definition}

\noindent We will require our sample sets to be separated in the following sense.

\begin{definition}
    Let $G$ be a topological group. A subset $\Lambda$ is said to be \emph{uniformly discrete} if there exists a neighbourhood $U\subseteq G$ of the identity such that every translate of $U$ contains at most one element of $\Lambda$.
\end{definition}

Finally, for every $x\in G$,  the {\it translation operator} by $x$ of a function $f\in L^2(G)$ is given by
$T_xf(y)=f(y-x)$ for $m_G$-a.e. $y\in G$. For $\xi\in\wh G$, the {\it modulation operator} by $\xi$ of a function $f\in L^2(G)$ is defined by
$M_\xi f(y)=\langle y,\xi\rangle f(y)$ for $m_G$-a.e. $y\in G$. With this, we can recall the definition of one of the main concepts treated in this work.

\begin{definition}
 Fixed a window function $g\in L^2(G)$, the  \emph{short-time Fourier transform} (STFT) of $f\in L^2(G)$ is the function
$$V_gf(x,\xi)=\langle f, M_\xi T_xg\rangle_{L^2(G)} \,\,\quad \mbox{for every}\,\,\quad (x,\xi)\in G\times\wh{G}.$$
\end{definition}

\section{Main tools: uniqueness sets and completeness of translates}\label{sec:main-tools}

By Theorem \ref{structure}, we know that every LCA group is isomorphic to $\R^d\times G_0$ where $G_0$ contains an open compact subgroup. In order to prove our main result, note first that the findings in \cite{Grohsetal} show that Theorem A  holds for $G=\R^d$.
To show that it holds as it is presented,  we begin by stating the  following generalisation of  \cite[Theorem 2.1]{Grohsetal} to LCA groups. The proof appearing in that work translates mutatis mutandis to this setting, using Proposition \ref{prop:completeness->inyect} below and \cite[Section 2.2]{FeiGroI} instead of the references therein. For $s\in G$ and $g:G\to \C$, we denote by $g_s$ the function given by $g_s(y)=\overline{g}(y)T_sg(y)$, for $y\in G$.

\begin{theorem}\label{thm:phaseret} 
	Let $G$ be an LCA group, $\Lambda \subset G$, $K\subset G$ a compact set, and $g\in L^{\infty}_{loc}(G)$ such that, for every  $s\in K-K$, the operator
	\begin{align}\label{eq: injective op}
	    C(g,s):L^1(K)\to \C^{\Lambda}, \qquad h \mapsto \Big(\int_K h(t)\overline{T_x g_s(t)}\dG(t)\Big)_{x\in \Lambda}\end{align}
	is injective.
Let  $\Gamma\subset \widehat{G}$ is a uniqueness set for  $PW_{K-K}(\widehat{G})$. Then every  $f\in L^2(K)$ is uniquely determined, up to a factor of absolute value 1, by $|V_gf(\Lambda\times \Gamma)|$.
\end{theorem}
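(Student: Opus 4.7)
The plan is to assume two functions $f_1, f_2 \in L^2(K)$ satisfy $|V_g f_1(x,\xi)| = |V_g f_2(x,\xi)|$ for every $(x,\xi) \in \Lambda \times \Gamma$ and to conclude that $f_1 = c f_2$ for some unimodular $c$. The argument proceeds in two Fourier-analytic stages: first, recognise $|V_g f(x,\cdot)|^2$ as a function lying in $PW_{K-K}(\widehat G)$, so that the uniqueness assumption on $\Gamma$ upgrades the hypothesis from $\Gamma$ to all of $\widehat G$; second, invert the Fourier transform in $\xi$ and exploit the injectivity of $C(g,s)$ along the samples in $\Lambda$ to extract pointwise identities between $f_1$ and $f_2$.

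\textbf{Bandlimited structure.} For the first stage, I would write $|V_g f(x,\xi)|^2 = V_g f(x,\xi)\,\overline{V_g f(x,\xi)}$, expand both factors, substitute $u = t-s$ in the inner integral and interchange integrals (justified since $f\in L^2(K)$, $K$ is compact and $g\in L^\infty_{loc}(G)$) to obtain
\begin{equation*}
|V_g f(x,\xi)|^2 = \int_G \phi_{x,f}(s)\,\overline{\langle s,\xi\rangle}\dG(s),\qquad \phi_{x,f}(s) := \int_K f(t)\,\overline{f(t-s)}\,T_x g_s(t)\dG(t),
\end{equation*}
recognising $\overline{g(t-x)}\,g(t-s-x) = T_x g_s(t)$ from the paper's definition $g_s(y)=\overline{g(y)}\,T_s g(y)$. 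The integrand of $\phi_{x,f}$ vanishes unless both $t$ and $t-s$ lie in $K$, so $\supp(\phi_{x,f})\subseteq K-K$; Cauchy--Schwarz together with the local boundedness of $g$ shows that $\phi_{x,f}$ is bounded, and since $K-K$ is compact $\phi_{x,f}$ belongs to $L^2(G)$. Hence $|V_g f(x,\cdot)|^2 = \widehat{\phi_{x,f}}$ lies in $PW_{K-K}(\widehat G)$ for every $x\in G$.

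\textbf{Reducing to pointwise identities.} For each $x\in\Lambda$ the difference $F_x := |V_g f_1(x,\cdot)|^2 - |V_g f_2(x,\cdot)|^2$ belongs to $PW_{K-K}(\widehat G)$ and vanishes on $\Gamma$, so by the uniqueness hypothesis $F_x\equiv 0$ on $\widehat G$. Fourier injectivity then gives $\phi_{x,f_1} = \phi_{x,f_2}$ a.e.\ on $G$, and taking complex conjugates rewrites this as $C(g,s)(h_1^s)_x = C(g,s)(h_2^s)_x$ for a.e.\ $s\in K-K$, where $h_i^s(t) := \overline{f_i(t)}\,f_i(t-s)$, which lies in $L^1(K)$ by Cauchy--Schwarz. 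Since this holds for all $x\in \Lambda$, the hypothesised injectivity of $C(g,s)$ forces $h_1^s = h_2^s$ in $L^1(K)$ for a.e.\ $s\in K-K$, which after the change of variable $u = t-s$ becomes
\begin{equation*}
\overline{f_1(t)}\,f_1(u) = \overline{f_2(t)}\,f_2(u)\qquad\text{for a.e.\ } (t,u)\in K\times K.
\end{equation*}

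\textbf{Conclusion and main obstacle.} Setting $t=u$ gives $|f_1|=|f_2|$ a.e.\ on $K$; if both vanish a.e.\ the claim is trivial, and otherwise I fix $u_0\in K$ with $f_2(u_0)\neq 0$ (so $f_1(u_0)\neq 0$ as well) and deduce $f_1(t) = c\,f_2(t)$ a.e.\ with $c := \overline{f_2(u_0)}/\overline{f_1(u_0)}$, whose modulus is one by the preceding equality. The main technical burden is the rigorous justification of the bandlimited identity of the second paragraph: Fubini, the $L^2$-membership of $\phi_{x,f}$ and the support bound $\supp(\phi_{x,f})\subseteq K-K$ are what transport the classical $\R^d$ argument of \cite{Grohsetal} to a general LCA group. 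Compactness of $K$ together with the $L^\infty_{loc}$ assumption on $g$ makes this routine, and once it is in place the remaining two uniqueness/injectivity applications are essentially mechanical, matching the mutatis-mutandis translation that the paper invokes via \cite[Section 2.2]{FeiGroI}.
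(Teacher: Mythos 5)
Your argument is, in outline, exactly the one the paper intends: the paper does not reprove this theorem but defers to \cite{Grohsetal}, and your two stages --- the identity $|V_gf(x,\cdot)|^2=\widehat{\phi_{x,f}}$ with $\supp\,\phi_{x,f}\subseteq K-K$, followed by the passage from the uniqueness of $\Gamma$ to $\phi_{x,f_1}=\phi_{x,f_2}$ and then, via the injectivity of $C(g,s)$, to $\overline{f_1(t)}f_1(u)=\overline{f_2(t)}f_2(u)$ for a.e.\ $(t,u)\in K\times K$ --- are precisely that proof. The computation identifying $\overline{\phi_{x,f}(s)}$ with $C(g,s)(h^s)_x$ for $h^s(t)=\overline{f(t)}f(t-s)\in L^1(K)$ is correct, as are the support and integrability checks. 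One quantifier deserves attention: for each fixed $x$ you obtain $h_1^s=h_2^s$ only for a.e.\ $s$, so to invoke the injectivity of $C(g,s)$ you need a single full-measure set of $s$ that works for \emph{all} $x\in\Lambda$ simultaneously; this interchange requires $\Lambda$ to be countable, which holds in every application in the paper (uniformly separated subsets of a second countable group) but is not literally among the hypotheses.

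The one step that fails as written is the endgame: ``setting $t=u$ gives $|f_1|=|f_2|$ a.e.''. Your identity holds only for almost every $(t,u)$ with respect to the product Haar measure on $K\times K$, and when $G$ is non-discrete the diagonal is a null set, so the identity cannot be evaluated there (for the same reason one cannot simply take $s=0$ in $h_1^s=h_2^s$). This matters because you use $|f_1(u_0)|=|f_2(u_0)|$ both to guarantee $f_1(u_0)\neq 0$ and to conclude $|c|=1$. The repair is short and stays inside your framework: if $f_1=0$ a.e., then Fubini applied to $\overline{f_2(t)}f_2(u)=0$ forces $f_2=0$ a.e.\ (and symmetrically), so assume both functions are nonzero on sets of positive measure and choose, by Fubini, a point $u_0$ with $f_1(u_0)\neq 0$ such that $\overline{f_1(t)}f_1(u_0)=\overline{f_2(t)}f_2(u_0)$ for a.e.\ $t$. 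Then necessarily $f_2(u_0)\neq 0$, and $f_2=\overline{a}\,f_1$ a.e.\ with $a=f_1(u_0)/f_2(u_0)$. Substituting back into the two-variable identity gives $\overline{f_1(t)}f_1(u)=|a|^2\,\overline{f_1(t)}f_1(u)$ for a.e.\ $(t,u)$, and since the left-hand side is not a.e.\ zero this forces $|a|=1$. With this replacement the argument is complete and matches the mutatis-mutandis translation the paper invokes.
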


The hypothesis on the injectivity of the operators in Theorem \ref{thm:phaseret} is difficult to tackle. Inspired in \cite{Grohsetal}, we will therefore replace it with a slightly stronger condition, namely the completeness of some systems of translates. This notion turns out to be better adapted for our purposes and has been widely studied along the literature (e.g. \cite{Lan72, Liehr25, Zalik78}).

\begin{definition}
    Let $K\subseteq G$ be a compact set. We will say that a system $\{f_i\}_{i\in I}\subseteq \calC(G)$ \emph{is complete in} $\calC(K)$ if $\{f_i\vert_K\}_{i\in I}\subseteq \calC(K)$ is complete respect to $\|\cdot\|_{\infty}$.
\end{definition}

We  now make the connection of completeness and the injectivity of the aforementioned operators explicit. 

\begin{proposition} \label{prop:completeness->inyect}
    If $F=\{f_i\}_{i\in I}\subseteq \calC (K)$ is complete, then the operator
    \begin{align*}
        C_F:  L^1&(K)\to \C^{I}\\
         & h \longmapsto \Big(\int_K h(t)\overline{f_i(t)}\dG(t)\Big)_{i\in I}
    \end{align*}
    is injective.
\end{proposition}
\begin{proof} Let $h\in L^1(K)$ be such that $C_F(h)=0$. Being $F$ complete, we have
$$ \int_K h(t) \overline{g(t)} \dG(t)=0$$
for every $g\in \calC(K)$. Since  $h\in L^1(K)$, the map $L^\infty(K)\to \C$ defined by
$$ g\mapsto \int_K h(t) \overline{g(t)} \dG(t)$$
is weak-star continuous. Since $\calC(K)$ is weak-star dense in $L^\infty (K)$ we can conclude that $h=0$. Thus, $C_F$ is injective.
\end{proof}

As the previous results suggest, uniqueness sets and completeness of translations are crucial to show our main results.  As they are also of independent interest, we analyze them separately.

\subsection{Uniqueness sets for Paley-Wiener spaces.}

Our next goal is to show, under certain assumptions, the existence of uniformly discrete uniqueness sets for Paley–Wiener spaces. Since these results are of independent interest, we first state them for an arbitrary LCA group and appropriate sets. We then specialize to the case of a dual group and a compact set of the form 
$K-K$, which leads to the theorem below. For further references on the study of uniqueness in Paley–Wiener spaces, see \cite{aku1, lai2021conjugate, zhang2024gabor, Thakur}.

\begin{theorem}\label{cor-uniqueness}
    If $G$ is compact, discrete or dually expansible, then for every compact set $K\subset G$  there exists a uniformly discrete uniqueness set $\Gamma$ for $PW_{K-K}(\wh G)$.
\end{theorem}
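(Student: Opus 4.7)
The plan is to handle the three cases---$G$ compact, $G$ discrete, and $G$ dually expansible---separately, since each admits a different natural construction of $\Gamma$.

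The compact case is essentially trivial: since $\wh{G}$ is discrete, one may take $\Gamma=\wh{G}$, which is uniformly separated and trivially a uniqueness set for $L^2(\wh{G})=\ell^2(\wh{G})$. In the discrete case, $K$ (and hence $K-K$) is finite, so $PW_{K-K}(\wh{G})$ consists of the trigonometric polynomials $F(\gamma)=\sum_{s\in K-K}c_s\langle s,\gamma\rangle$. Injectivity of the Fourier transform on discrete measures $\sum_s c_s\delta_s$ makes the characters $\{\langle s,\cdot\rangle\}_{s\in K-K}$ linearly independent on $\wh{G}$; picking evaluation points iteratively yields $n:=|K-K|$ elements $\gamma_1,\ldots,\gamma_n\in\wh{G}$ such that the matrix $(\langle s_i,\gamma_j\rangle)_{i,j}$ is invertible. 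The finite set $\Gamma=\{\gamma_1,\ldots,\gamma_n\}$ is automatically uniformly separated in the Hausdorff group $\wh{G}$, and invertibility of the matrix turns $F|_\Gamma\equiv 0$ into $c_s=0$ for all $s$, i.e.\ $F\equiv 0$.

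The main case is $G$ dually expansible, where I exploit expansiveness on $\wh{G}$ to produce periodicity on the $G$-side. Let $B$ be an expansive automorphism of $\wh{G}$ with respect to an open compact subgroup $L_0\subseteq\wh{G}$ and set $H_0:=L_0^{\perp}\subseteq G$. Taking annihilators and applying \cite[Lemma 2.6]{BeBe04} and \eqref{exp} (both to $\wh{G}$) gives $H_0\subsetneq B^*H_0$ and $G=\bigcup_{n\geq 0}B^{*n}H_0$. Since $K-K$ is compact and the sets $B^{*n}H_0$ form an increasing open cover of $G$, there is $N\geq 0$ with $K-K\subseteq B^{*N}H_0$. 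Set $L:=(B^{*N}H_0)^{\perp}=B^{-N}L_0$, an open compact subgroup of $\wh{G}$. For every $F\in PW_{K-K}(\wh{G})$ and $\lambda\in L$,
\[
F(\gamma+\lambda)=\int_{B^{*N}H_0}\wh{F}(x)\,\overline{\langle x,\gamma\rangle}\,\overline{\langle x,\lambda\rangle}\,dm_G(x)=F(\gamma),
\]
since $\langle x,\lambda\rangle=1$ on $L^{\perp}=B^{*N}H_0$. Taking $\Gamma$ to be any measurable section of $\wh{G}/L$ inside $\wh{G}$, the preliminary observation $(\gamma+L)\cap\Gamma=\{\gamma\}$ shows $\Gamma$ is uniformly separated (with witness neighborhood $L$), and the $L$-periodicity of $F$ turns $F|_{\Gamma}\equiv 0$ into $F\equiv 0$ on all of $\wh{G}$.

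The main technical hurdle is the duality bookkeeping in the expansible case, namely translating expansiveness of $B$ on $\wh{G}$ into the exhaustion $G=\bigcup_n B^{*n}H_0$ of $G$ by open compact subgroups. Once that exhaustion is in hand, both the periodicity argument and the construction of $\Gamma$ as a section of $\wh{G}/L$ are very short; in fact this template also absorbs the compact case by taking $L=\{0\}$.
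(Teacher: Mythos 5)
Your proposal is correct and follows essentially the same route as the paper: take $\Gamma=\wh G$ in the compact case, a finite set built from the linear independence of the finitely many characters $\{\langle s,\cdot\rangle\}_{s\in K-K}$ in the discrete case (this is the paper's Proposition \ref{prop:uniq-compact} applied to $\wh G$), and a section of $\wh G$ modulo the annihilator of an exhausting subgroup $B^{*N}H_0\supseteq K-K$ in the dually expansible case. The only difference is that you re-derive inline (periodicity of $PW$-functions on cosets, sections as uniqueness sets) what the paper delegates to Corollary \ref{coro:sampling} and Proposition \ref{prop:supp}; the duality bookkeeping is handled correctly.
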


The proof follows by combining a series of results that we will prove next. 
Let $G$ be an LCA group with  $H\subseteq G$ an open compact subgroup and fix $C\subseteq G$ a section for
the quotient $G/H$. The next proposition was shown in \cite{BE04}:
\begin{proposition} \cite[Proposition 2.1]{BE04}
    Let $f\in L^2(G)$, $A\in Aut (G)$ and $n\in\Z$. Then $\supp\,\wh f \subset (A^*)^nH^{\perp}$ if and only if $f$ is constant on every (open) coset of $A^{-n}H$ in $G$.
\end{proposition}
As an immediate consequence, one has that the  functions in $PW_{H^\perp}(G)$ are precisely those that are constant on all the cosets of $H$ in $G$. Moreover, we can straightforwardly derive  the following decomposition for the elements in the Paley-Wiener space.
\begin{corollary}   \label{coro:sampling}
    Every $f\in PW_{H^{\perp}}(G)$ is determined by the values it takes in any section $C$ of $G/H$. More precisely,
    \[f(x)= \sum_{c\in C} f(c)\chi_{H+c}(x), \qquad \text{for }f\in PW_{H^{\perp}}(G).\]
\end{corollary}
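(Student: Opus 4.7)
The plan is to derive the corollary almost immediately from the preceding proposition, since the statement is really just a concrete reformulation of the fact that functions in $PW_{H^\perp}(G)$ are the locally constant functions with respect to the partition of $G$ into $H$-cosets.

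First, I would apply the cited proposition from \cite{BE04} with $n=0$ (the automorphism $A$ is then irrelevant, since $A^{-0}H = H$): the condition $\supp\,\widehat{f} \subseteq H^\perp$ is equivalent to $f$ being constant on every coset of $H$ in $G$. Thus for each $f\in PW_{H^\perp}(G)$ there is a well-defined value assigned to every coset $H+c$.

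Next, I would use the defining property of a section: $C$ contains exactly one representative of each coset of $H$ in $G$, so the cosets $\{H+c\}_{c\in C}$ form a partition of $G$, and the characteristic functions $\{\chi_{H+c}\}_{c\in C}$ are pairwise disjointly supported with $\sum_{c\in C}\chi_{H+c}\equiv 1$ pointwise. For an arbitrary $x\in G$ there is a unique $c_x\in C$ with $x\in H+c_x$, and since $f$ is constant on this coset, $f(x)=f(c_x)$. Writing this pointwise identity as a sum over $C$ gives exactly
\[f(x)=\sum_{c\in C} f(c)\,\chi_{H+c}(x),\]
which is the desired formula.

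There is no real obstacle here: the content lies entirely in the proposition that characterises $PW_{H^\perp}(G)$ as the space of $H$-coset-constant $L^2$ functions, and the corollary is just the translation of this characterisation into a sampling identity on the discrete set $C$. The only minor point worth noting is that, because $H$ is open and $C$ discrete, the sum is pointwise (in fact only one term is non-zero for each $x$), so no convergence issue arises.
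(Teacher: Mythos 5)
Your argument is correct and is exactly the route the paper intends: the paper treats the corollary as an immediate consequence of the cited proposition (taken with $n=0$, so that $PW_{H^\perp}(G)$ consists of the functions constant on $H$-cosets), and your proof simply fills in the straightforward details about the section $C$ partitioning $G$ into cosets. Nothing further is needed.
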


\noindent We now take  $L\subset G$ to  be a compact subgroup. For $\xi \in \wh{G}$ we have
   $$ \wh{\chi_L}(\xi)=\int_{G} \chi_L(t) \overline{\langle t,\xi\rangle} \,dm_G(t) = \int_L \langle t, \xi|_L\rangle \dG(t).$$
   If $\xi\in L^{\perp}$, then $\int_L \langle t, \xi|_L\rangle \dG(t)=m_{G}(L)$. On the other hand, if $\xi\notin L^{\perp}$, then
there exist some $t_0\in L$ such that $\langle t_0, \xi\rangle\neq 0$. Since the Haar measure is invariant under translation we have
$$\int_L \langle t, \xi|_L\rangle \dG(t) = \int_L \langle t-t_0+t_0, \xi|_L\rangle \dG(t)= \langle t_0, \xi|_L\rangle \int_L \langle t, \xi|_L\rangle \dG(t).$$
The last integral is finite because $L$ is compact, so the previous equality shows  $\int_L \langle t, \xi|_L\rangle \,dm_G(t)=0$.
This altogether gives that
\begin{equation}\label{eq:charH}
    \wh{\chi_L}=m_{G}(L)\chi_{L^{\perp}}
\end{equation}
for any compact subgroup $L\subset G$

By Corollary \ref{coro:sampling}, every set that contains a section of $G/H$ is a uniqueness set for $PW_{H^{\perp}}(G)$. We will now show that this condition is also necessary.

\begin{proposition} Let \label{prop:supp} be
  $G$ an LCA group and $H\subseteq G$ a compact and open subgroup. Then   $\Upsilon\subseteq G $ is a uniqueness set for $PW_{H^\perp}(G)$ if and only if $\Upsilon$ contains a section of $G/H$.
\end{proposition}

\begin{proof}
    First notice that for every $x\in G$ the function given by $f(y)=T_x \chi_H(y)=\chi_{H+x}(y)$ satisfies $\wh{f}=\overline{\langle x,\cdot\rangle} \wh{\chi_H}$ and then, by \eqref{eq:charH}, it
    belongs to $PW_{H^\perp}$.

    Now assume that $\Upsilon\subseteq G $ is a uniqueness set for $PW_{H^\perp}(G)$ and suppose that $\Upsilon$ does not contain any section of $G/H$. Then, there  exists some $x_0\in G$ such that $\Upsilon\cap (H+x_0)=\emptyset$. If we consider $f=\chi_{H+x_0}$, we have that $f\in PW_{H^\perp}$ and  $f(\Upsilon)=0$. However, $f\neq0$  which is a contradiction. As already noted, the other implication is immediate from Corollary \ref{coro:sampling}.
\end{proof}

\begin{proposition}
    Let $\Upsilon\subseteq G$ and $\Omega\subseteq \wh{G}$ be a Borel set. Then $\Upsilon$ is a uniqueness set for $PW_{\Omega}(G)$ if and only if it is a uniqueness set for $PW_{\Omega+\xi}(G)$ for every $\xi\in \wh{G}$.
\end{proposition}
\begin{proof}
   Let $\xi\in\wh{G}$. It is immediate to see that $\supp\,( \wh{f})\subseteq \Omega$ if and only if $\supp\, (T_{\xi}\wh{f})\subseteq \Omega+\xi$. Thus $f\in PW_{\Omega}(G)$ if and only if $\langle \cdot, \xi\rangle f\in PW_{\Omega+\xi}(G)$ and therefore they have the same uniqueness sets.
   \end{proof}

\begin{proposition}\label{prop:uniq-compact}
    Let $G$ be a compact abelian group. Given a compact subset $N\subseteq\wh{G}$ there exist a finite uniqueness set $\Upsilon\subset G$ for $PW_N(G)$.
\end{proposition}
\begin{proof}
    Since $G$ is a compact group, its dual is a discrete group and so every compact subset $N$ of $\wh{G}$ has finitely many elements. On the other hand, $\{\langle\cdot,\eta\rangle, \eta \in \wh{G}\}$ is an orthonormal basis of $L^2(G)$ and then, given $f\in L^2(G)$, we can write it as
  \begin{equation}\label{eq:characters-bon}
       f(\cdot)=\sum_{\eta\in\wh{G}}\wh{f}(\eta) \langle\cdot,\eta\rangle.
  \end{equation}
Since $f\in PW_N(G)$ if and only if $\wh{f}(\eta)=0$ for all $\eta\notin N$, by \eqref{eq:characters-bon} we have that $PW_N(G)=span\{\eta_i\}_{i=0}^{n}$ where $N=\{\eta_0,\cdots,\eta_n\}$.

In order to construct $\Upsilon$ we  introduce a useful notation. For every $x\in G$ let $X(x)$ be the element of $\C^{n+1}$ given by $X(x)=(\langle x,\eta_0\rangle,\langle x,\eta_1\rangle,\cdots,\langle x,\eta_n\rangle)$.

Now, fix any $x_0\in G$ and call $X^0:=X(x_0)$.
If for each $x\in G$ there exists $\alpha_x\in\C$ such that $X(x) = \alpha_x X^0$, then for $f\in PW_N(G)$ we have
   \begin{align*}
       f(x)=\sum_{j=0}^n \wh{f}(\eta_j) \langle x,\eta_j\rangle &= \alpha_x \sum_{j=0}^n \wh{f}(\eta_j)  \langle x_0,\eta_j\rangle= \alpha_x f(x_0).
       \end{align*}
    Thus,  $\Upsilon=\{x_0\}$ is a uniqueness set for $PW_N(G)$.

    If not, there exists $x_1\in G\setminus\{x_0\}$ such that the set $\{X^0,X^1\}\subset\C^{n+1}$ is linearly independent, where $X^1:=X(x_1)$. As before, if for every $x\in G$ exist $\alpha_{x}^0, \alpha_{x}^1\in\C$ such that $X(x)=\alpha_{x}^0X^0+\alpha_{x}^1X^1$ then for every $f\in PW_N(G)$ we have
    $$f(x)=\alpha_{x}^0f(x_0)+\alpha_{x}^1f(x_1)$$
    and hence $\Upsilon=\{x_0,x_1\}$ is a uniqueness set for $PW_N(G)$. If not, there exist $x_2\in G\setminus\{x_0,x_1\}$ such that the set $\{X^0, X^1, X^2\}$ is linearly independent, where $X^2:=X(x_2)$. If $\Upsilon=\{x_0,x_1, x_2\}$ is a uniqueness set we stop, and if not we continue with  the same procedure.
    Note that at most, we have to iterate the process  $n$ times to get a uniqueness set for $PW_N(G)$.~\end{proof}

We are ready to prove the main result of this subsection.
\begin{proof}[Proof of  Theorem \ref{cor-uniqueness}]
If $G$ is compact, then $\wh{G}$ is a discrete group and, for every compact set $K\subset G$, it suffices to take $\Gamma=\wh{G}$ as uniqueness set for $PW_{K-K}(\wh{G})$. If $G$ is discrete, then $\wh{G}$ is a compact group. Hence it suffices to apply Proposition \ref{prop:uniq-compact} to the group $\wh G$ and the compact set $N=K-K$, for any compact set $K\subseteq G$.

If $G$ is dually expansive then $G=\cup_{n\geq0}A^nH$. Therefore, for every compact set $K\subset G$ there exist some $n_0\in\N$ such that $K-K\subseteq A^{n_0}H$. Since $
A^nH=[(A^*)^{-n}H^{\perp}]^{\perp}$ for every $n\in \mathbb N$, we can use  Proposition \ref{prop:supp} applied to $\wh{G}$ and $(A^*)^{-n_0}H^{\perp}$ to get that if
we take $\Gamma$  as a  section of $\wh{G}/(A^*)^{-n_0}H^{\perp}$, then $\Gamma$ is a uniqueness set for $PW_{A^{n_0}H}(\wh{G})$. As $PW_{K-K}(\wh{G})\subseteq PW_{A^{n_0}H}(\wh{G})$ is immediate that $\Gamma$ is also a uniqueness set for $PW_{K-K}(\wh{G})$.
\end{proof}

\subsection{Completeness of systems of  translates}

In this section we turn our efforts to find a function $g\in L^{\infty}_{loc}(G)$ and a set $\Lambda\subset G$ such that for every compact set $K\subset G$ the set of translates $\{T_xg_s\}_{x\in\Lambda}$ is complete in $\calC(K)$, where $g_s=\overline{g}T_sg$. We will study three base cases separately, namely when $G$ is $\R^d$, is discrete or contains a proper open compact subgroup, respectively.
In the first of these situations, the existence of both a set $\Lambda$ and a function $g$ satisfying the desired properties is guaranteed by the following statement (see the proof of \cite[Proposition 2.2]{Grohsetal}).
\begin{proposition}
     \label{window Rd}
    Let $g\in\calC(\R^d)$ be a multivariate complex-valued Gaussian
    $$g(x)=e^{-x^TAx},$$
    where $A\in\C^{d\times d}$ is a positive definite Hermitian matrix. If $K\subseteq\R^d$ is a compact set and $\Lambda$ is an arbitrary lattice, then $\{T_xg_s\}_{x\in\Lambda}$ is complete in $(\calC(K),\|\cdot\|_{\infty})$ for each $s\in K-K$.
\end{proposition}

The other two base cases are considered in the next subsections.

\subsubsection*{Completeness of translates in discrete groups} \addcontentsline{toc}{subsubsection}{Completeness in discrete groups}

We  now address the case where $G$ is a discrete group equipped with the counting measure. Our goal is to construct a  random continuous function $g:G\to\mathbb C$ having complete systems of translates $\{T_xg_s\}_{x\in G}$ with probability $1$.
We  now address the case where $G$ is a discrete group equipped with the counting measure. Our goal is to prove the following  deterministic  result by constructing a  random continuous function $g:G\to\mathbb C$ for which the system of translates $\{T_xg_s\}_{x\in G}$ is complete with probability $1$.

\begin{theorem}
 \label{prop:complete-discrete}
    Let $G$ be a discrete abelian group. There exists a continuous function $g$ such that  for every compact $K\subset G$ the system $\{T_xg_s\}_{x\in G}$ is complete in $\calC(K)$ for every $s\in K-K$.
\end{theorem}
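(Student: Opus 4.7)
The plan is to reduce completeness, for each pair $(K,s)$, to the non-vanishing of one explicit determinant, and then to exhibit a single $g$ for which all these determinants are simultaneously nonzero.

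Since $G$ is discrete, every function $G\to\C$ is continuous and every compact $K\subseteq G$ is finite. Writing $K=\{y_1,\ldots,y_n\}$, the space $\calC(K)=\C^K$ has complex dimension $n$, so completeness of $\{T_xg_s\}_{x\in G}$ in $\calC(K)$ amounts to finding $x_1,\ldots,x_n\in G$ that make the $n\times n$ matrix $M_{ij}=g_s(y_i-x_j)$ invertible. I would take the canonical choice $x_j=y_j$, which yields
\[
M_{ij}=\overline{g(y_i-y_j)}\,g(y_i-y_j-s),
\]
and view $\det M$ as a polynomial with integer coefficients in the formal indeterminates $w_z=g(z)$ and $\bar w_z=\overline{g(z)}$ for $z\in(K-K)\cup(K-K-s)$.

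The crucial algebraic step is to verify that this polynomial is not identically zero. In the Leibniz expansion, the identity permutation contributes the monomial $\bar w_0^{\,n}\,w_{-s}^{\,n}$. For any $\sigma\neq\mathrm{id}$ there is an index $i$ with $\sigma(i)\neq i$, and since the $y_j$ are pairwise distinct, $y_i-y_{\sigma(i)}\neq 0$; hence the $\sigma$-monomial contains a factor $\bar w_z$ with $z\neq 0$ and cannot coincide with $\bar w_0^{\,n}w_{-s}^{\,n}$. Thus the identity monomial survives with coefficient $+1$, and $\det M(K,s,g)$ is not identically zero as a polynomial.

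With this algebraic fact in hand, the function $g$ is produced probabilistically: take $(g(z))_{z\in G}$ to be an i.i.d.\ family of standard complex Gaussian random variables. Any fixed nonzero polynomial in the coordinates $(g(z),\overline{g(z)})_z$ is then almost surely nonzero. Assuming $G$ is countable, the collection of pairs $(K,s)$ with $K\subseteq G$ finite and $s\in K-K$ is also countable, so a countable-union argument yields a single realisation of $g$ for which $\det M(K,s,g)\neq 0$ for every $(K,s)$ simultaneously; this $g$ is the required (automatically continuous) function. For uncountable discrete $G$, the same conclusion follows by constructing $g$ through transfinite recursion so that $\{g(z),\overline{g(z)}\}_{z\in G}$ is algebraically independent over $\Q$. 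The main obstacle is the non-cancellation argument for the identity monomial in the Leibniz expansion; once that is in place, the construction of $g$ is routine.
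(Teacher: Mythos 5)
Your proposal is correct, and its overall architecture coincides with the paper's: both take $g$ to be an i.i.d.\ Gaussian family on the (countable) discrete group, reduce completeness in the finite-dimensional space $\calC(K)$ to the invertibility of an $n\times n$ matrix of translates, and conclude by intersecting countably many almost-sure events. Where you genuinely diverge is in how almost-sure invertibility is obtained. The paper selects the translation indices $j_1^N,\dots,j_N^N$ inductively so that the entries $g(y_k-y_{j_\ell})\overline{g(y_k-s-y_{j_\ell})}$ become independent random variables, and then invokes the a.s.\ invertibility of random matrices with independent absolutely continuous entries. You instead fix the canonical translates $x_j=y_j$ and show directly that $\det M$ is a nonzero formal polynomial in the indeterminates $w_z=g(z)$ and $\bar w_z=\overline{g(z)}$, because the identity permutation's monomial $\bar w_0^{\,n}w_{-s}^{\,n}$ cannot be produced by any $\sigma\neq\mathrm{id}$ (whose monomial carries a factor $\bar w_z$ with $z\neq 0$); a polynomial in the Gaussians and their conjugates that is formally nonzero vanishes with probability zero. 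Your route buys two things: it eliminates the bookkeeping in the choice of the $j_\ell^N$, and it sidesteps the independence of the matrix entries altogether --- a genuinely delicate point in the paper's argument, since two entries in the same column can share a Gaussian factor whenever $s=y_{k'}-y_k$, which does occur for $s\in K-K$. The mild price is that you must use (and should state) the standard fact that a polynomial $Q(w,\bar w)$ which is nonzero in independent formal variables has Lebesgue-null zero set on $\C^m$, so that it is a.s.\ nonzero at absolutely continuous random inputs. Your closing remark on uncountable discrete groups is a bonus the paper does not need, since it works under the standing assumption of second countability.
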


\begin{proof} Since $G$ is discrete and second countable we can write $G=\{y_k\}_{k\in \mathbb N}$. Let $(\gamma_k)_{k\in \N}$ be a sequence of independent  standard Gaussian variables and define $g:G\to \mathbb C$ as $g(y_k)=\gamma_k$ for each $k\in\N$. We aim to show that, almost surely,  $g$ satisfies our requirements for every $K$.

First observe that for $s\in G$ and $j\in\N$ we have
$$
    g_s(y_k)= g(y_k)\overline{g(y_k-s)} \quad\text{and}\quad T_{y_j}g_s(y_k)=g(y_k-y_j)\overline{g(y_k-s-y_j)}.
$$
We claim  that for  each  $N\in \N$, there are indices  $j_1^N, \dots j_N^N$ such that for all $s\in \{y_1,\dots, y_N \}$, the $N\times N$ matrix
$$ A^s_N= \big( T_{y_{j^N_\ell}}g_s(y_k) \big)_{1\le k,\ell\le N} = \big( g(y_k-y_{j^N_\ell})\overline{g(y_k-s-y_{j^N_\ell})} \big)_{1\le k,\ell\le N} $$
has independent entries.  Indeed, we can choose $j_\ell^N$ inductively so that $y_k-y_{j^N_\ell}$ and $y_k-s-y_{j^N_\ell}$ are  different from $y_k-y_{j^N_m}$ and $y_k-s-y_{j^N_m}$ for $k=1,\dots,N$, $s\in \{y_1,\dots,y_N\}\setminus\{ 0\}$ and $m=1,\dots,\ell-1$. This can be done because we have infinitely many $j$'s to choose from.
Then, the claim follows because measurable functions of independent random variables are independent.

It is well-known that  a matrix  is almost surely invertible whenever its entries are absolutely continuous. Since this is the case for  each $ A^s_N$, we have that it is invertible with probability one. Moreover, as  we have countably many matrices $ A^s_N$ (with $N\in \N$ and $s\in \{y_1,\dots, y_N \}$), we conclude that, with probability one, every $ A^s_N$ is invertible. Thus, we can take a realisation of $(\Gamma_k)_{k\in \N}$ such that every $ A^s_N$ is invertible.

Now, given a compact (hence finite) set $K\subset G$, we choose $N$ such that $K\subset\{y_1,\dots,y_N\}$ and $K-K\subset\{y_1,\dots,y_N\}$. Then,  take any  $h\in \calC(K)\subset \calC(\{y_1,\dots,y_N\})$. Since $A^s_N$ is invertible, the system
$$ \sum_{k=1}^N  g(y_k-y_j)\overline{g(y_k-s-y_j)}\  \alpha_k = h(y_j)  \quad  j=1,\dots,N$$ has a (unique) solution.  This means that we can write $h=\sum_{k=1}^N  \alpha_k \  T_{y_j}g_s$, which shows that  $\{T_xg_s\}_{x\in G}$ is complete in $\calC(K)$ for every  $s\in K-K$.
\end{proof}

\subsubsection*{Completeness in the presence of a (proper) open compact subgroup
}\addcontentsline{toc}{subsubsection}{Completeness in dually expansible groups}

We now  focus on the last base  case mentioned before, where we are in the presence of a non-compact group having a proper subgroup which is both open and compact. In this situation we have the following result.

\begin{theorem}\label{thm-compl}
       Let $G$ be a non-compact LCA group with an open compact proper subgroup $H$. Then there exists  $g\in \calC(G)$ and a uniformly discrete set $\Lambda\subset G$ such that for every compact $K\subseteq H$, the system $\{T_xg_s\}_{x\in \Lambda}$ is complete in $\calC(K)$ for every $s\in K-K$.
Moreover, if $G$ is dually expansible, then the conclusion remains true  for \emph{every} compact $K\subset G$.
\end{theorem}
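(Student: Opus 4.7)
The strategy combines the random-Gaussian approach of the discrete case (Theorem \ref{prop:complete-discrete}) with Fourier analysis on the compact group $H$. Take $\Lambda=C$ to be a section of $G/H$. Since $G/H$ is discrete (because $H$ is open), $C$ is uniformly separated: the open neighbourhood $H\ni 0$ already satisfies $(c+H)\cap C=\{c\}$ for every $c\in C$. Since $K\subseteq H$ and $H$ is a subgroup, $K-K\subseteq H$, so by the Tietze surjection $\calC(H)\to\calC(K)$ it suffices to produce $g\in\calC(G)$ such that $\{T_xg_s|_H\}_{x\in\Lambda}$ is complete in $\calC(H)$ for every $s\in H$.

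Enumerate $C=\{c_k\}_{k\in\N}$, and define a bijection $\sigma\colon\N\to\N$ and elements $\eta_j\in H$ by $-c_j=c_{\sigma(j)}+\eta_j$. Enumerate $\wh H=\{\chi_m\}_{m\in\N}$, fix positive weights $\{w_m\}$ with $\sum_m w_m<\infty$, and take i.i.d.\ standard complex Gaussians $\{a_{k,m}\}$. Define
$$g(c_k+h):=\sum_{m}w_m\,a_{k,m}\,\chi_m(h),$$
which converges uniformly, so $g\in\calC(G)$ almost surely. Writing $f_k:=g(c_k+\cdot)$ and $F_{k,s}:=\overline{f_k}\,T_sf_k$, a short calculation gives $T_{c_j}g_s|_H=T_{-\eta_j}F_{\sigma(j),s}$ for every $y,s\in H$.

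By Riesz representation and Fourier analysis on the compact group $H$, completeness of $\{T_xg_s|_H\}_{x\in\Lambda}$ in $\calC(H)$ is equivalent to injectivity of the infinite matrix $M(s):=\bigl(\chi(\eta_j)\,\wh{F_{\sigma(j),s}}(\chi)\bigr)_{j\in\N,\,\chi\in\wh H}$ acting on $\ell^\infty(\wh H)$ (where the Fourier coefficients of Radon measures on $H$ live). Each row of $M(s)$ is a quadratic form in the block $\{a_{\sigma(j),m}\}_m$ of Gaussians, and rows for distinct $j$'s depend on disjoint blocks, hence are independent. Mimicking Theorem \ref{prop:complete-discrete}, for any finite $S\subseteq\wh H$ one inductively picks $|S|$ rows whose $|S|\times|S|$ submatrix has absolutely continuous joint law and is thus almost surely invertible; a countable union over finite $S\subseteq\wh H$ yields almost sure completeness of the system at each fixed $s\in H$.

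The principal obstacle is to upgrade this to ``almost surely, \emph{for every} $s\in H$'', since $H$ is uncountable. My plan is to exploit the continuity in $s$ of the map $s\mapsto T_xg_s|_H\in\calC(H)$ (ensured by the absolute convergence of the defining series of $g$): at a countable dense $\{s_n\}\subset H$, completeness holds almost surely simultaneously, and since the entries of $M(s)$ depend continuously on $s$ the invertibility of each finite submatrix is an open condition in $s$, so a Baire/approximation argument at the Fourier level should propagate completeness to all $s\in H$. For the dually expansible case, by the lemma preceding \eqref{exp} we have $G=\bigcup_{n\geq 0}H_n$ with $H_n:=A^nH$ an increasing exhaustion by open compact subgroups; any compact $K\subset G$ sits in some $H_n$ with $K-K\subseteq H_n$. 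The same $\Lambda$ and $g$ work because each $H_n$-coset is a finite disjoint union of $|H_n/H|<\infty$ many $H$-cosets, so the Fourier-theoretic probabilistic argument repeats verbatim with $H_n$ in place of $H$, and a countable union over $n$ together with the same continuity-in-$s$ argument concludes.
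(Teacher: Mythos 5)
Your construction of $g$ --- a random Fourier series on each coset of $H$, indexed by a section of $G/H$ --- is essentially the paper's \eqref{eq:randomg}, and the reduction to completeness in $\calC(H)$ is sound. The completeness argument itself, however, has a genuine gap. Completeness of $\{T_{c_j}g_s|_H\}_j$ in $\calC(H)$ means that no nonzero Radon measure $\nu$ on $H$ satisfies $\sum_{\chi}\chi(\eta_j)\,\wh{F_{\sigma(j),s}}(\chi)\,\wh{\nu}(\chi)=0$ for all $j$, and almost-sure invertibility of selected $|S|\times|S|$ blocks, even for every finite $S\subseteq\wh{H}$, does not imply this: for each selected row the sum over $\chi\notin S$ is uncontrolled, and an infinite matrix all of whose finite minors are nonzero can still annihilate a nonzero bounded sequence. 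The determinant argument of Theorem \ref{prop:complete-discrete} works precisely because there $\calC(K)$ is finite-dimensional; here it is not, and you would need quantitative lower bounds on the singular values of the blocks beating the $\ell^1$-tails of the rows, which ``absolutely continuous law, hence a.s.\ invertible'' does not provide. The paper avoids this entirely by a constructive argument: the law of large numbers shows that the explicit averages $\frac1N\sum_{k\le N} T_{x_k}g_s|_H\,\overline{\lambda_{0,k}}\lambda_{-\eta_0,k}$ converge uniformly to a nonzero multiple of the character $\eta_0$, so every character lies in the closed span and density of the span of characters in $\calC(H)$ finishes.

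The second gap is the one you flag yourself: passing from a countable dense set of $s$ to all $s\in H$. Continuity of $s\mapsto T_xg_s|_H$ does not do it, because completeness is not stable under perturbation of the generating system without control on the size of the approximating coefficients, and your argument supplies none (as $s_n\to s$ the coefficients needed to approximate a fixed $h$ from $\{T_xg_{s_n}\}$ may blow up). The paper's design makes this issue vanish: its almost-sure events (Lemmas \ref{lemma:averages} and \ref{lemma:averages0}) are indexed by countably many characters and involve no $s$ at all, while $s$ enters the final estimates only through unimodular factors $\mu(-s)$, so the bounds are automatically uniform in $s$; moreover the Steinhaus choice gives $|\lambda_{\mu,k}|=a_\mu$ deterministically, which is what makes the tail estimates work (with Gaussian coefficients the tails are random and need extra care). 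Finally, for the dually expansible case your claim that the same $g$ and the $G/H$-section work ``verbatim'' with $A^nH$ in place of $H$ is unsubstantiated; the paper instead rebuilds $g$ and $\Lambda$ relative to $A^{n_0}H$ once $K$ is given.
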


For the proof it is enough to   assume that $K=H$. Indeed, if $K\subseteq H$ is a compact set and  the  system $\{T_{x}g_s\}_{x\in\Lambda}$ is complete in $\calC(H)$, then so is in $\calC(K)$.

Now, for $\{x_k\}_{k\in\N}$ a  fixed    section of $G/H$ (which is non-trivial since $H$ is proper)
we will  construct a continuous random function $g\in\calC(G)$ such that, with probability one,  the system $\{T_{x_k}g_s\}_{k\in\N}$ is complete in $\calC(H)$ for every $s\in H$. For that purpose we fix $Z$ a Steinhaus random variable, i.e. a random variable uniformly distributed on $\mathbb T$, and $\{a_\mu\}_{\mu \in \wh{H}}$ a sequence of positive numbers satisfying $a_0^2> \sum_{\mu\neq 0} a_\mu^2$. For every $k\in\N$ and each $\mu\in\wh{H}$ we also define independent random variables $\lambda_{\mu,k}$ having the same distribution as $a_\mu Z$. In particular,
$$\mathbb{E}(\lambda_{\mu,k})=\mathbb{E}((\lambda_{\mu,k})^2)=0 \,\mbox{ and }\, Var(\lambda_{\mu,k})=a_\mu^2.$$
Observe that for a fixed $\mu\in\wh{H}$ the sequence  $\{\lambda_{\mu,k}\}_{k\in\N}$ is identically distributed.

Let us consider the random function $g\in \calC(G)$ defined by
\begin{equation}\label{eq:randomg}
    g= \sum_{\eta\in\wh{H}, k\in\N} \lambda_{\eta,k} \,\, T_{-x_k}\eta \,\,\chi_{H-x_k}.
\end{equation}

Straightforward calculation shows that for $s\in H$,
$$ T_{x_k}g_s\vert_H= \sum_{\eta\in \wh{H}} \Big( \sum_{\mu\in \wh{H}} \lambda_{\mu,k} \,\overline{\lambda_{\mu-\eta,k}} \,\,\mu(-s) \Big) \eta. $$
In order to understand the expected behaviour of these translates, we first need  some technical lemmas.

\begin{lemma}\label{lemma:averages} Let $\mu,\eta,\eta_0\in \wh{H}$ with $\eta_0\neq 0$. Then, with probability $1$ we have
    $$ \frac{1}{N}\sum_{k=1}^N \lambda_{\mu,k} \overline{\lambda_{\mu-\eta,k}} \,\,\overline{\lambda_{0,k}} \lambda_{-\eta_0,k}\xrightarrow{N\to\infty} \begin{cases}
         a_0^2a_{-\eta_0}^2 & \mbox{ if } \mu=0 \mbox{ and } \eta=\eta_0 \\
         0 & \mbox{ otherwise. }
    \end{cases}  $$
\end{lemma}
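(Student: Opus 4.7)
The plan is to recognise this as a strong law of large numbers statement. For each fixed choice of $\mu,\eta,\eta_0$, the summand
$$X_k := \lambda_{\mu,k}\,\overline{\lambda_{\mu-\eta,k}}\,\overline{\lambda_{0,k}}\,\lambda_{-\eta_0,k}$$
depends only on the variables $\lambda_{\nu,k}$ for $\nu$ in the finite set $\{\mu,\mu-\eta,0,-\eta_0\}$. Since the family $\{\lambda_{\nu,k}\}_{\nu,k}$ is jointly independent and the distribution of each $\lambda_{\nu,k}$ does not depend on $k$, the $X_k$'s are i.i.d.\ in $k$; as each $|X_k|\le a_\mu a_{\mu-\eta}a_0 a_{-\eta_0}$, they are bounded, hence integrable. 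Thus the classical SLLN gives $\frac{1}{N}\sum_{k=1}^N X_k \to \mathbb{E}(X_1)$ almost surely, and the task reduces to computing $\mathbb{E}(X_1)$.

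To evaluate this expectation, I would write each $\lambda_{\nu,1}=a_\nu W_\nu$ where $W_\nu$ is Steinhaus and recall that $\mathbb{E}(W^n\overline{W}^m)=0$ unless $n=m$, in which case it equals $1$. Grouping the four factors of $X_1$ by the value of their $\nu$-index and using the independence of $\lambda_{\nu,1}$ across distinct $\nu$, one sees that $\mathbb{E}(X_1)$ vanishes unless, for every $\nu\in\wh{H}$, the number of non-conjugated factors indexed by $\nu$ equals the number of conjugated ones. Equivalently, the multiset $\{\mu,-\eta_0\}$ of indices of non-conjugated factors must coincide with the multiset $\{\mu-\eta,0\}$ of indices of conjugated factors.

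This matching admits only two possibilities: either $\mu=\mu-\eta$ and $-\eta_0=0$, which forces $\eta_0=0$ and contradicts the hypothesis; or $\mu=0$ and $-\eta_0=\mu-\eta$, giving precisely $\mu=0$ and $\eta=\eta_0$. In the latter case $X_1 = |\lambda_{0,1}|^2|\lambda_{-\eta_0,1}|^2$, which is the deterministic value $a_0^2 a_{-\eta_0}^2$, matching the stated limit.

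I do not anticipate any substantive obstacle: the argument is SLLN plus a direct moment computation for Steinhaus variables. The only mild care lies in the bookkeeping of possible coincidences among the four indices $\mu,\mu-\eta,0,-\eta_0$ when factoring the expectation by independence, but these coincidences just collapse some factors together and are harmless.
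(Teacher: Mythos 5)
Your proof is correct and follows essentially the same route as the paper's: the strong law of large numbers reduces the claim to computing $\mathbb{E}(\lambda_{\mu,1}\overline{\lambda_{\mu-\eta,1}}\,\overline{\lambda_{0,1}}\lambda_{-\eta_0,1})$, which is then evaluated by factoring over distinct indices and using the vanishing moments of Steinhaus variables. Your multiset bookkeeping is just a more explicit version of the paper's remark that the expectation vanishes unless $\mu=0$ and $\eta=\eta_0$.
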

\begin{proof} By the law of large numbers,
    $$ \frac{1}{N}\sum_{k=1}^N \lambda_{\mu,k} \overline{\lambda_{\mu-\eta,k}} \,\,\overline{\lambda_{0,k}} \lambda_{-\eta_0,k}\xrightarrow{N\to\infty} \mathbb{E}(\lambda_{\mu,1} \overline{\lambda_{\mu-\eta,1}} \,\,\overline{\lambda_{0,1}} \lambda_{-\eta_0,1}), $$
almost surely. Since these variables are independent with zero expected value, then
$$  \mathbb{E}(\lambda_{\mu,1} \overline{\lambda_{\mu-\eta,1}} \,\,\overline{\lambda_{0,1}} \lambda_{-\eta_0,1})=0$$
if every subscript appears only once. Moreover, since $\eta_0\neq 0$ and $\mathbb{E}((\lambda_{\mu,1})^2)=0$ this expression also vanishes unless $\mu=0$ and $\eta=\eta_0$, in which case we obtain $ \frac{1}{N}\sum_{k=1}^N \lambda_{\mu,k} \overline{\lambda_{\mu-\eta,k}} \,\,\overline{\lambda_{0,k}} \lambda_{-\eta_0,k}=a_0^2a_{-\eta_0}^2$.
\end{proof}
\noindent The same reasoning leads to the following:

\begin{lemma} \label{lemma:averages0}
    Let $\mu,\eta\in \wh{H}$. Then, with probability $1$ we have
    $$ \frac{1}{N}\sum_{k=1}^N \lambda_{\mu,k} \overline{\lambda_{\mu-\eta,k}}\xrightarrow{N\to\infty}  \begin{cases}
         a_\mu^2 & \mbox{ if }  \eta=0 \\
         0 & \mbox{ otherwise. }
    \end{cases} $$
\end{lemma}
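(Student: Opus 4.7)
The plan is to follow the same scheme as in the proof of Lemma \ref{lemma:averages}: apply the strong law of large numbers (SLLN) to the sequence $X_k := \lambda_{\mu,k}\overline{\lambda_{\mu-\eta,k}}$, $k\in\mathbb N$, and then evaluate the limiting expectation directly. The situation here is actually simpler than in Lemma \ref{lemma:averages}, since the product has only two factors instead of four, so the combinatorial case analysis collapses to two cases.

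First I would verify that $(X_k)_{k\in\mathbb N}$ is i.i.d.\ and bounded. Independence across $k$ follows because $X_k$ is a measurable function of the subfamily $\{\lambda_{\nu,k}\}_{\nu\in\wh H}$, and these subfamilies (for different $k$) are mutually independent by construction; identical distribution across $k$ is immediate because the joint law of $(\lambda_{\mu,k},\lambda_{\mu-\eta,k})$ does not depend on $k$; and each $X_k$ is bounded by $a_\mu a_{\mu-\eta}$, since $|\lambda_{\nu,k}|=a_\nu$ (recall that the Steinhaus variable $Z$ takes values in $\mathbb T$). The SLLN therefore yields
$$ \frac{1}{N}\sum_{k=1}^N \lambda_{\mu,k}\overline{\lambda_{\mu-\eta,k}} \ \xrightarrow{N\to\infty}\ \mathbb E\bigl(\lambda_{\mu,1}\overline{\lambda_{\mu-\eta,1}}\bigr) $$
almost surely.

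It remains to compute this expectation case by case. If $\eta=0$, then $\lambda_{\mu-\eta,1}=\lambda_{\mu,1}$ and, using $|Z|=1$, we get
$\mathbb E(\lambda_{\mu,1}\overline{\lambda_{\mu,1}})=\mathbb E(|\lambda_{\mu,1}|^2)=a_\mu^2$. If $\eta\neq 0$, then $\mu$ and $\mu-\eta$ are distinct subscripts, so $\lambda_{\mu,1}$ and $\lambda_{\mu-\eta,1}$ are independent by construction, and since each has mean zero the expectation factors and vanishes.

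No genuine obstacle is anticipated. The only mild subtlety is the use of $\eta\neq 0$ to guarantee that $\mu\neq\mu-\eta$ (hence the independence of the two factors in that case), which is automatic. The whole argument is entirely parallel to—and in fact a special case of—the combinatorial analysis carried out in the proof of Lemma \ref{lemma:averages}.
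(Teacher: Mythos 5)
Your proof is correct and follows essentially the same route as the paper, which simply notes that ``the same reasoning'' as in Lemma \ref{lemma:averages} applies: the strong law of large numbers reduces the claim to computing $\mathbb E(\lambda_{\mu,1}\overline{\lambda_{\mu-\eta,1}})$, which is $a_\mu^2$ when $\eta=0$ (since $|\lambda_{\mu,1}|=a_\mu$) and vanishes otherwise by independence and zero mean. Your explicit verification of the i.i.d.\ and boundedness hypotheses is a welcome, if routine, addition.
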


\begin{proposition}\label{prop:completenessbyprobability}
Let $\{x_k\}_{k\in\N}$ be a section of $G/H$ and $g$ the random function defined in \eqref{eq:randomg}. Then with probability one, the system $\{T_{x_k}g_s\}_{k\in \N}$ is complete in $\calC(H)$ for all $s\in H$.
\end{proposition}
\begin{proof}
    In order to keep a clear notation, let us write $Y_k^s= T_{x_k}g_s\vert_H $. Recall that since $H$ is compact, the algebra of its  characters is dense in $\calC(H)$. Then it suffices to show that every character belongs to the closure of ${\rm span}(Y_k^s)_{k\in \N}$. Let $\eta_0$ be a non-trivial character on $H$. We will now show that with probability $1$, for every $s\in H$ the expression $ \frac{1}{N}\sum_{k=1}^N Y_k^s \overline{\lambda_{0,k}} \lambda_{-\eta_0,k} $ uniformly converges to a non-zero scalar multiple of $\eta_0$ when $N\to\infty$.  For this, we take a realisation of the random variables $\lambda_{\mu,k}$ such that the countably many convergences given in Lemmas \ref{lemma:averages} and \ref{lemma:averages0} hold.

For every $\varepsilon>0$, let $A\subseteq \wh{H}$ be a finite set satisfying $0\in A$ and
    \begin{equation}
        \label{eqA} \sum_{\mu\notin A} a_\mu < \varepsilon.
    \end{equation}
Now take a finite set $B\subseteq\wh{H}$ such that $\eta_0\in B$ and
    \begin{equation}
        \label{eqB}
         \sum_{\eta \notin B} a_{\mu-\eta}<\varepsilon \,\, \mbox{ for every } \, \mu\in A.
    \end{equation}
    Lemma \ref{lemma:averages} implies
    $\frac{1}{N}\sum_{k=1}^N \lambda_{\mu,k} \overline{\lambda_{\mu-\eta,k}} \,\,\overline{\lambda_{0,k}} \lambda_{-\eta_0,k}\xrightarrow{N\to\infty} \delta_{(\eta_0,0)}(\eta,\mu)a_0^2a^2_{-\eta_0}.$
    Since both $A$ and $B$ are finite, there exists $N_1$ large enough such that
    \begin{equation} \label{eqAB}
        \Big|\frac{1}{N}\sum_{k=1}^N \lambda_{\mu,k} \overline{\lambda_{\mu-\eta,k}} \,\,\overline{\lambda_{0,k}} \lambda_{-\eta_0,k}-\delta_{(\eta_0,0)}(\eta,\mu)a^2_0a^2_{-\eta_0}\Big|<\frac{\varepsilon}{\# A \# B}
    \end{equation}
    for every $\eta\in B$ and $\mu \in A$ provided that $N\geq N_1$.
Given $t\in H$ and using the notation
\begin{align*}
\Big| \frac{1}{N}\sum_{k=1}^N Y_k^s (t)\overline{\lambda_{0,k}} \lambda_{-\eta_0,k} - a_0^2 a_{-\eta_0}^2\eta_0 (t) \Big|  = \star,
\end{align*}
an easy rearrangement gets us
\begin{align*}
\star &= \Big|\sum_{\eta\in \wh{H}}\Big(\sum_{\mu \in \wh{H}}\mu(-s)\frac1N\sum_{k=1}^N\lambda_{\mu,k}\overline{\lambda_{\mu-\eta,k}}\,\,\overline{\lambda_{0,k}}\lambda_{-\eta_0,k}\Big)\eta(t)-a_0^2a^2_{-\eta_0}\eta_0(t)\Big|\\
&=\Big|\sum_{\eta\in \wh{H}}\Big(\sum_{\mu \in \wh{H}}\mu(-s)\frac1N\sum_{k=1}^N\lambda_{\mu,k}\overline{\lambda_{\mu-\eta,k}}\,\,\overline{\lambda_{0,k}}\lambda_{-\eta_0,k}-\delta_{(\eta_0,0)}(\eta,\mu)a_0^2a_{-\eta_0}^2\Big)\eta(t)\Big| \\
&\le \sum_{\eta\in \wh{H}}\sum_{\mu \in \wh{H}}\Big|\frac1N\sum_{k=1}^N\lambda_{\mu,k}\overline{\lambda_{\mu-\eta,k}}\,\,\overline{\lambda_{0,k}}\lambda_{-\eta_0,k}-\delta_{(\eta_0,0)}(\eta,\mu)a_0^2a_{-\eta_0}^2\Big| .
\end{align*}
We can now make use of our choices of $A$ and $B$ and write the above expression in the form
\begin{align*}
\sum_{\eta\in B}\sum_{\mu \in A } \Big|\frac1N\sum_{k=1}^N&\lambda_{\mu,k}\overline{\lambda_{\mu-\eta,k}}\,\,\overline{\lambda_{0,k}}\lambda_{-\eta_0,k}-\delta_{(\eta_0,0)}(\eta,\mu)a_0^2a_{-\eta_0}^2\Big|  + \sum_{\eta\notin B}\sum_{\mu \in A}\Big|\frac1N\sum_{k=1}^N\lambda_{\mu,k}\overline{\lambda_{\mu-\eta,k}}\,\,\overline{\lambda_{0,k}}\lambda_{-\eta_0,k}\Big|\\
&+\sum_{\eta \in \wh{H}}\sum_{\mu \notin A}\Big|\frac1N\sum_{k=1}^N\lambda_{\mu,k}\overline{\lambda_{\mu-\eta,k}}\,\,\overline{\lambda_{0,k}}\lambda_{-\eta_0,k}\Big|,
\end{align*}
where we are using that the function $\delta_{(\eta_0,0)}(\eta,\mu)a_0^2a_{-\eta_0}^2$ vanishes along $B^c\times A$ and $\wh{H}\times A^c$. By ~\eqref{eqAB}, we see that the first term is bounded by $\varepsilon$ whenever $N\geq N_1$. Since $|\lambda_{\mu,k}|=a_\mu$ for every $\mu$ and $k$, we can safely assure that
\begin{align*}
    \star \le \varepsilon + \sum_{\eta\notin B}\sum_{\mu \in A}a_{\mu}a_{\mu-\eta}a_{0}a_{-\eta_0}+ \sum_{\eta\in \wh{H}}\sum_{\mu \notin A}a_{\mu}a_{\mu-\eta}a_{0}a_{-\eta_0}.
\end{align*}
For the last term, we can use \eqref{eqA} and show
$$ \sum_{\eta\in \wh{H}} \sum_{\mu\notin A} a_\mu a_{\mu-\eta} \,\, a_0 a_{-\eta_0} = a_0 a_{-\eta_0} \sum_{\mu\notin A}a_\mu  \sum_{\eta\in \wh{H}} a_{\mu-\eta} = a_0 a_{-\eta_0}  \sum_{\mu\notin A}a_\mu  \sum_{\nu\in \wh{H}}a_\nu < C \varepsilon  .$$
Similarly, \eqref{eqB} implies
$\sum_{\eta\notin B } \sum_{\mu\in A}  a_\mu a_{\mu-\eta} \,\, a_0 a_{-\eta_0} < C\varepsilon$,
which shows that $\eta_0\in \overline{{\rm span}}(Y_k^s)_{k\in \N}$ whenever $\eta_0$ is not trivial. For the remaining case, we will show that
$$\frac{1}{N} \sum_{k=1}^N Y_k^s \xrightarrow{N\to \infty} \sum_{\mu \in \wh{H}} a_\mu^2 \mu(-s),$$
uniformly.
Indeed, from Lemma \ref{lemma:averages0} we know that
\[\frac{1}{N} \sum_{k=1}^N \lambda_{\mu,k} \overline{\lambda_{\mu-\eta,k}} \xrightarrow{N\to\infty}\delta_{0}(\eta)a_{\mu}^2.\]
We define the sets $A$ and $B$ and the constant $N_1$ as we did before, with the minor change that we only ask for $0$ to belong to $B$. With this setup, we can bound
\begin{align*}
    \Big|\frac1N&\sum_{k=1}^N Y^s_k(t)-\sum_{\mu\in \wh{H}}a^2_{\mu}\mu(-s)\Big| = \Big|\sum_{\eta\in\wh{H}}\Big[\sum_{\mu\in\wh{H}}\mu(-s)\Big(\frac1N\sum_{k=1}^N\lambda_{\mu}\overline{\lambda_{\mu-\eta,k}}-\delta_0(\eta)a_\mu^2\Big)\Big]\eta(t)\Big|\\
    &\le \sum_{\eta\in B}\sum_{\mu\in A}\Big|\frac1N\sum_{k=1}^N\lambda_{\mu,k}\overline{\lambda_{\mu-\eta,k}}-\delta_0(\eta)a_{\mu}^2\Big| + \sum_{\eta\notin B}\sum_{\mu\in A} |\lambda_{\mu,k} \overline{\lambda_{\mu-\eta,k}}| + \sum_{\eta\in\wh{H}}\sum_{\mu\notin A} |\lambda_{\mu,k} \overline{\lambda_{\mu-\eta,k}}|.
\end{align*}
    From here we can proceed as in the prior case to show that the constant function $\sum_{\mu\in \wh{H}}a^2_{\mu}\mu(-s)$  belongs to the subspace spanned by $(Y_k^s)_{k\in \N}$. Since $a_0^2> \sum_{\mu\neq 0} a_\mu^2$, 
     this constant is different from 0 and then the trivial character is also in that subspace, which ends the proof.  \qedhere
\end{proof}

\begin{proof}[Proof of Theorem \ref{thm-compl}]
The first part of Theorem \ref{thm-compl} is a simple consequence of Proposition \ref{prop:completenessbyprobability}. If $G$ is a dually expansive group, for every compact set $K\subset G$ there exists some $n_0\in\N$ such that both $K$ and $K-K$ are contained in $A^{n_0}H$. So we can construct $g$ as in \eqref{eq:randomg} by taking $A^{n_0}H$ as an open compact subgroup of $G$ and $\Lambda$ a section of $G/A^{n_0}H$. The result follows applying Proposition  \ref{prop:completenessbyprobability}.
\end{proof}

\section{Proof of Theorem A}\label{sec:proofs}
At this point we can combine Theorem \ref{thm:phaseret} with the results of the previous sections to obtain Theorem A. The following lemmas show how to deal with products of simpler groups.

\begin{lemma}\label{lemma:productcompleteness}
For $i=1,2$ let $G_i$ be an LCA group, $K_i\subseteq G_i$ a compact set and $\Lambda_i\subseteq G_i$. Suppose further that $g_i\in \calC(G_i)$ satisfies that the system $\{T_{x}g_i \}_{x\in \Lambda_i}$ is complete in $\calC(K_i)$. Then $\{T_{(x,y)}g_1g_2\}_{x\in\Lambda_1,y\in\Lambda_2}$ is complete in $\calC(K_1\times K_2)$.
\end{lemma}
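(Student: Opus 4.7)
The plan is to combine the completeness of the individual systems with the density of elementary tensors in $\calC(K_1\times K_2)$. First, I observe the structural identity
\[
T_{(x,y)}(g_1 g_2)(s,t)=g_1(s-x)g_2(t-y)=\bigl(T_x g_1\bigr)(s)\cdot \bigl(T_y g_2\bigr)(t),
\]
so the proposed system consists precisely of the pointwise products (elementary tensors) of translates along $\Lambda_1$ on the first coordinate and $\Lambda_2$ on the second. In particular, any product $p_1\cdot p_2$ with $p_i$ in the linear span of $\{T_x g_i\}_{x\in\Lambda_i}$ is itself a linear combination of elements of the proposed system.

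Next, I would reduce the problem to approximating pure tensors. By a standard application of the Stone--Weierstrass theorem (the subalgebra of $\calC(K_1\times K_2)$ generated by elementary tensors $h_1\cdot h_2$, with $h_i\in\calC(K_i)$, separates points and contains constants), the linear span of such elementary tensors is uniformly dense in $\calC(K_1\times K_2)$. Therefore it suffices to show that every function of the form $h_1\cdot h_2$, with $h_i\in\calC(K_i)$, lies in the closure of the span of $\{T_{(x,y)}(g_1g_2)\}_{(x,y)\in\Lambda_1\times\Lambda_2}$ in $(\calC(K_1\times K_2),\|\cdot\|_\infty)$.

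Finally, I would carry out the approximation. Fix $\varepsilon>0$ and $h_1, h_2$. Using the hypothesised completeness on each factor, choose finite linear combinations $p_i=\sum_{j} \alpha^{(i)}_j\, T_{x^{(i)}_j} g_i$ with $\|p_i-h_i\|_{\infty,K_i}<\varepsilon$. Then
\[
p_1\cdot p_2 \;=\; \sum_{j,k} \alpha^{(1)}_j \alpha^{(2)}_k\, T_{(x^{(1)}_j,x^{(2)}_k)}(g_1 g_2)
\]
lies in the span of the proposed system, and using
\[
p_1 p_2 - h_1 h_2 = (p_1-h_1)\cdot p_2 + h_1\cdot (p_2-h_2),
\]
together with $\|f\cdot g\|_{\infty,K_1\times K_2}\le \|f\|_{\infty,K_1}\|g\|_{\infty,K_2}$, the error is bounded by $\varepsilon\bigl(\|p_2\|_{\infty,K_2}+\|h_1\|_{\infty,K_1}\bigr)$, which can be made arbitrarily small since $\|p_2\|_\infty\le \|h_2\|_\infty+\varepsilon$.

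I do not expect a serious obstacle: the main ingredients are the factorisation of the translation on a product group and the classical density of elementary tensors in $\calC(K_1\times K_2)$, with the rest being a routine two-line estimate on the product of uniformly-close approximants.
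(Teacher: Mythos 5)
Your proof is correct and follows essentially the same route as the paper's: the factorisation $T_{(x,y)}(g_1g_2)=T_xg_1\cdot T_yg_2$, the Stone--Weierstrass reduction to elementary tensors, and the same telescoping estimate on the product of approximants. No gaps.
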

\begin{proof}
First notice that it suffices to prove that $\calC(K_1)\times\calC(K_2)\subseteq\overline{{\rm span}}\{T_{(x,y)}g_1g_1\}_{x\in\Lambda_1,y\in\Lambda_2}$ since the algebra of separate variable functions is dense in $\calC(K_1\times K_2)$ by the Stone–Weierstrass theorem.
Moreover, from the equality $T_{(x,y)}g_1g_2=T_xg_1\, T_yg_2$ it follows that $\varphi_1\,\varphi_2\in {\rm span}\{T_{(x,y)}g_1g_2\}_{x\in\Lambda_1\, y\in\Lambda_2}$ for every $\varphi_1\in{\rm span}\{T_xg_1\}_{x\in\Lambda_1}$ and $\varphi_2\in{\rm span}\{T_yg_2\}_{y\in\Lambda_2}$.

Now, given $f=f_1\,f_2$ with $f_i\in\calC(K_i)$ for $i=1,2$ and $\varepsilon>0$ there exist $\varphi_1 \in {\rm span}\{T_xg_1\}_{x\in\Lambda_1}$ and $\varphi_2\in{\rm span}\{T_yg_2\}_{y\in\Lambda_2}$  such that $\|f_i-\varphi_i\|_{\calC(K_i)}<\varepsilon$ for $i=1,2$. But then
\begin{align*}
    \|f_1f_2-\varphi_1\varphi_2\|_{\calC(K_1\times K_2)}&\leq \|f_1f_2-f_1\varphi_2\|_{\calC(K_1\times K_2)}+\|f_1\varphi_2-\varphi_1\varphi_2\|_{\calC(K_1\times K_2)} \\
    &= \|f_1\|_{\calC(K_1)}\|f_2-\varphi_2\|_{\calC(K_2)} + \|\varphi_2\|_{\calC(K_2)} \|f_1-\varphi_1\|_{\calC(K_1)}\\
    &\leq \|f_1\|_{\calC(K_1)} \varepsilon + (\|f_2\|_{\calC(K_2)}+\varepsilon) \varepsilon \xrightarrow{\varepsilon\to0}0,
\end{align*}
and the lemma follows.
\end{proof}

\begin{lemma}\label{lem:productoPW}
For $i=1,2$ let $G_i$ be an LCA group and  $\Omega_i\subseteq \wh G_i$ a Borel set.
    Let $f\in PW_{\Omega_1\times \Omega_2}(G_1\times G_2) $. Then $f_x=f(x,\cdot)\in PW_{\Omega_2} (G_2)$ for a.e. $x\in G_1$
\end{lemma}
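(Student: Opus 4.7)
The plan is to identify the partial Fourier transform of $f$ in the second variable with the pointwise Fourier transforms of the slices $f_x$, and then use the tensor-product factorisation $\mathcal F_{G_1\times G_2}=\mathcal F_{G_1}\otimes\mathcal F_{G_2}$ to transfer the support condition from $\widehat f$ to $\widehat{f_x}$.

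First I would apply Fubini to see that $f_x=f(x,\cdot)\in L^2(G_2)$ for a.e. $x\in G_1$, so that $\widehat{f_x}$ is a well defined element of $L^2(\wh G_2)$ for such $x$. Setting $F(x,\xi_2):=\widehat{f_x}(\xi_2)$, the crucial step is to establish, for every $f\in L^2(G_1\times G_2)$, the two identities
\[
\|F\|_{L^2(G_1\times\wh G_2)}=\|f\|_{L^2(G_1\times G_2)} \quad \text{and} \quad \widehat f(\xi_1,\xi_2)=\widehat{F(\cdot,\xi_2)}(\xi_1)\ \text{a.e.,}
\]
where the Fourier transform on the right-hand side of the second identity is taken in the first variable only. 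Both identities are immediate for simple tensors $\varphi_1\otimes\varphi_2$ with $\varphi_i\in L^1(G_i)\cap L^2(G_i)$ by Fubini and the definition of the Fourier transform, and they extend to arbitrary $f\in L^2(G_1\times G_2)$ by density of such simple tensors and $L^2$-continuity of the operators involved.

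With these identities at hand, the remaining argument is short. Since $\widehat f$ vanishes a.e. off $\Omega_1\times\Omega_2$, an application of Fubini shows that for a.e. $\xi_2\notin\Omega_2$ the slice $\widehat f(\cdot,\xi_2)$ is zero a.e. on $\wh G_1$; by Plancherel combined with the second identity above, $F(\cdot,\xi_2)=0$ a.e. on $G_1$ for a.e. such $\xi_2$. A further Fubini argument then yields that, for a.e. $x\in G_1$, $F(x,\xi_2)=0$ for a.e. $\xi_2\notin\Omega_2$. Since $F(x,\cdot)=\widehat{f_x}$, this exactly says $\supp(\widehat{f_x})\subseteq\Omega_2$ for a.e. $x\in G_1$.

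The hard part is the justification of the factorisation of $\mathcal F_{G_1\times G_2}$ as iterated partial Fourier transforms on $L^2$: the partial Fourier transform in one variable has to be defined via the Plancherel extension on that factor, so the verification passes through density of $L^1\cap L^2$-tensors and requires some care to track the a.e. qualifiers introduced by each Fubini step. Once this factorisation is in place, everything else reduces to Plancherel and elementary measure-theoretic bookkeeping.
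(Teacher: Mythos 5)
Your argument is correct, and it rests on the same underlying fact as the paper's proof, namely that the Fourier transform on $G_1\times G_2$ factors through the partial Fourier transform in the second variable; the difference is that you establish this in strong form while the paper only needs a weak one. You build the partial transform $f\mapsto\bigl(x\mapsto\widehat{f_x}\bigr)$ as an $L^2$-isometry and prove the factorisation identity $\widehat f(\xi_1,\xi_2)=\widehat{F(\cdot,\xi_2)}(\xi_1)$ by density of simple tensors, which is where all the technical work (and the bookkeeping of a.e.\ qualifiers you flag) is concentrated. The paper instead argues by duality: for $E\subseteq G_1$ and $F\subseteq\wh G_2\setminus\Omega_2$ of finite measure it pairs $(x,\eta)\mapsto\widehat{f_x}(\eta)$ against $\chi_E\otimes\chi_F$, applies Parseval in the $G_2$-variable and then on the full product to rewrite the pairing as an integral of $\widehat f$ against $\widehat{\chi_E}\otimes\chi_F$, which vanishes because the supports are disjoint; since such tensors separate points of $L^2\bigl(G_1\times(\wh G_2\setminus\Omega_2)\bigr)$, the partial transform vanishes a.e.\ there, and the final Fubini step is identical to yours. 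The duality version buys a shorter proof with no extension-by-density argument, since Parseval is only ever invoked for explicit tensor test functions; your version buys a reusable statement (the $L^2$ tensor factorisation of the Fourier transform on a product group) at the cost of exactly the care you correctly identify as the hard part. There is no gap in your outline.
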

\begin{proof}
Fix $f\in PW_{\Omega_1\times \Omega_2}(G_1\times G_2)$ and   take $E\subseteq G_1$, $F\subseteq \wh{G}_2\setminus \Omega_2$  subsets of finite measure. Since $\wh{f}$ and $\wh{\chi_E}\chi_F$ have disjoint supports, we have
\begin{align*}
    \int_{G_1}\int_{\wh{G_2}} \wh{f_x}(\eta) \chi_E(x) \chi_F(\eta) \,{\rm{d}}m_{\wh G_2}(\eta) \,{\rm{d}}m_{G_1}(x) &= \int_{G_1}\int_{G_2} f_x(y) \chi_E(x) \chi_F^{\vee}(y)\,{\rm{d}}m_{G_2}(y)\,{\rm{d}}m_{G_1}(x) \\
    &=\int_{\wh{G_1}}\int_{\wh{G_2}}\wh{f}(\gamma, \eta) \wh{\chi_E}(\gamma) \chi_F(\eta) \,{\rm{d}}m_{\wh G_2}(\eta) \,{\rm{d}}m_{\wh G_1}(\gamma) =0.
\end{align*}
Thus $\wh{f_x}(\eta)=0$ for a.e. $(x,\eta)\in G_1\times \wh{G_2}\setminus \Omega_2$. In particular, the set $N=\{ (x,\eta)\in G_1\times \wh{G_2}\setminus \Omega_2 : \wh{f_x}(\eta)=0\}$ satisfies $m_{G_1\times\wh{G_2}}(N)=0$. Applying Fubini's Theorem we obtain $m_{\wh{G_2}}(N_x)=0$ for a.e. $x\in G_1$. This means that $\wh{f_x}=0$ a.e. in $\wh{G_2}\setminus \Omega_2$ for a.e. $x\in G_1$.
\end{proof}

\begin{lemma}\label{lemma:productuniqueness}
    If $\Gamma_i$ is a uniqueness set for $PW_{\Omega_i}(G_i)$ for $i=1,2$, then $\Gamma_1\times \Gamma_2$ is a uniqueness set for the space $PW_{\Omega_1\times \Omega_2}(G_1\times G_2)$.
\end{lemma}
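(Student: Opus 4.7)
The plan is to reduce the two-dimensional uniqueness statement to two successive applications of the one-dimensional ones via slicing, invoking Lemma~\ref{lem:productoPW} in both variables. Let $f \in PW_{\Omega_1\times\Omega_2}(G_1\times G_2)$ satisfy $f(\gamma_1, \gamma_2) = 0$ for every $(\gamma_1, \gamma_2) \in \Gamma_1 \times \Gamma_2$; the goal is to conclude $f = 0$. For the pointwise vanishing hypothesis to be meaningful, I work with the continuous representative of $f$, which exists under the implicit assumption that $\Omega_1 \times \Omega_2$ has finite measure (the natural setting for uniqueness sets in Paley--Wiener spaces).

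First, I would fix $\gamma_1 \in \Gamma_1$ and examine the slice $f_{\gamma_1}(y) := f(\gamma_1, y)$. Lemma~\ref{lem:productoPW} yields $f_x \in PW_{\Omega_2}(G_2)$ for a.e.\ $x \in G_1$; in our setting one can upgrade this to \emph{every} $x \in G_1$ by noting that $\widehat{f} \in L^1(\Omega_1\times\Omega_2)$, so that the partial Fourier integral $\widehat{f}^{\,x}(\eta) := \int \widehat{f}(\gamma,\eta)\langle x,\gamma\rangle \,dm_{\widehat{G_1}}(\gamma)$ is well-defined pointwise, supported in $\Omega_2$, and lies in $L^2(\widehat{G_2})$ (a quick Cauchy--Schwarz using $m_{\widehat{G_1}}(\Omega_1)<\infty$), and $f_x$ is its inverse Fourier transform. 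Thus $f_{\gamma_1} \in PW_{\Omega_2}(G_2)$, and since it vanishes on $\Gamma_2$, the uniqueness of $\Gamma_2$ gives $f_{\gamma_1} \equiv 0$. Varying $\gamma_1 \in \Gamma_1$, I conclude that $f$ vanishes on the entire strip $\Gamma_1 \times G_2$.

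Second, I would apply Lemma~\ref{lem:productoPW} with the roles of $G_1$ and $G_2$ swapped: for a.e.\ $y \in G_2$, the slice $f^y(x) := f(x,y)$ belongs to $PW_{\Omega_1}(G_1)$. By the conclusion of the first step, $f^y$ vanishes on $\Gamma_1$, so the uniqueness of $\Gamma_1$ for $PW_{\Omega_1}(G_1)$ forces $f^y \equiv 0$ for a.e.\ $y$. Hence $f = 0$ almost everywhere on $G_1 \times G_2$, i.e., $f = 0$ in $L^2(G_1\times G_2)$.

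The main technical obstacle lies in the first step: Lemma~\ref{lem:productoPW} only supplies the slice property a.e., whereas $\Gamma_1$ may have Haar measure zero (think of a countable uniqueness set), so the a.e.\ conclusion does not automatically transfer to each individual $\gamma_1 \in \Gamma_1$. Overcoming this requires the pointwise slice argument sketched above, which uses the continuous representative of $f$ and the finite measure of the spectrum. The second step is free of this difficulty because only almost-everywhere vanishing is needed to deduce $f=0$ in $L^2$.
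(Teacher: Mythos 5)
Your proof follows essentially the same route as the paper's: slice in the first variable, apply Lemma~\ref{lem:productoPW} together with the uniqueness of $\Gamma_2$ to conclude $f_{\gamma_1}\equiv 0$ for each $\gamma_1\in\Gamma_1$, then slice in the second variable and use the uniqueness of $\Gamma_1$. Your extra argument upgrading the almost-every-slice conclusion of Lemma~\ref{lem:productoPW} to \emph{every} slice (via $\widehat{f}\in L^1$ when the spectrum has finite measure) addresses a point the paper's proof passes over silently, and it is genuinely needed in the first step since $\Gamma_1$ will typically be a Haar-null set.
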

\begin{proof}
Let $f\in PW_{\Omega_1\times \Omega_2}(G_1\times G_2)$ such that $f(\Gamma_1\times\Gamma_2)=0$.
For each $\gamma_1\in\Gamma_1$ we have that $f_{\gamma_1}(\Gamma_2)=0$, therefore, by Lemma \ref{lem:productoPW}, $f_{\gamma_1}=0$ in $G_2$. This shows $f(\Gamma_1\times G_2)=0$. Now fix some $y\in G_2$. Again, we are in the situation $f_y(\Gamma_1)=0$. Hence $f=0$ in $G_1\times G_2$.
\end{proof}

\noindent We are now ready to prove our main theorem.

\begin{proof}[Proof of Theorem A]
Let $G\simeq \R^d \times G_1\times\dots\times G_n$ be an LCA group, where each $G_i$ contains a proper open compact subgroup $H_i$. We use the notation $G_0$ for the first factor and $\pi_i$ for the $i$-th projection. Observe that any compact $K\subseteq G$ satisfies $K\subseteq \prod_{i=0}^n \pi_i(K)$. In particular, it is enough to consider compacts $K$ for the form $K=K_0\times \dots \times K_n$. 

Let $K_i\subset G_i$ be a compact set. If $G_i$ is discrete or dually expansive then by Theorem \ref{cor-uniqueness} there exists a uniformly discrete uniqueness set $\Gamma_i\subset\wh{G_i}$ for the space $PW_{K_i-K_i}(\wh{G_i})$. Also, by Theorem \ref{prop:complete-discrete} and Theorem \ref{thm-compl} there exist a function $g_i\in\calC(G_i)$ and a uniformly discrete set $\Lambda_i\subset G_i$ such that the system $\{T_x(g_i)_s\}_{x\in\Lambda_i}$ is complete in $\calC(G_i)$ for every $s\in K_i-K_i$.
If on the other hand $K_i\subseteq H_i$, then we can take $\Gamma_i$ to be a section of $\wh{G_i}/H_i^\perp$ as in Proposition \ref{prop:supp}. In this case, the existence of $\Lambda_i$ and $g_i$ is assured by Theorem \ref{thm-compl}. For the case $i=0$ we refer to the constructions made in \cite{Grohsetal}.

By Lemma \ref{lemma:productcompleteness}, the function $g:=g_0\dots g_n$ and the uniformly discrete set $\Lambda:= \Lambda_0\times\dots\times\Lambda_n$ are such that the system $\{ T_xg_s\}_{x\in \Lambda}$ is complete in $\calC(G)$ for every $s\in K-K$. Using Proposition \ref{prop:completeness->inyect} we know that the application $C(g,s)$ given in \eqref{eq: injective op} is injective.
Also, by Lemma \ref{lemma:productuniqueness} we can guarantee that $\Gamma:= \Gamma_0\times\dots\times\Gamma_n$ is a uniqueness set for $PW_{K-K}(\wh{G})$.
Now we can conclude applying Theorem \ref{thm:phaseret}.
\end{proof}

\section{Limitations for phase retrieval in $L^2(G)$}\label{sec:nonunique}

Although the short-time Fourier transform provides a powerful tool for local time-frequency analysis, it is unclear whether STFT phase retrieval on the whole $L^2(G)$ is feasible. 
When $G=\mathbb R^d$ it was proved in \cite{PR1,PR4} that elements in $L^2(\mathbb R^d)$ are not determined (up to a global phase) from the absolute values of their STFT on a lattice (or a set that can be derived from a lattice), regardless of the choice of the window function. Using the same methods, we will now deduce the analogous result for an arbitrary LCA group $G$, motivating the restriction to the subspaces $L^2(K)$ for some compact set $K\subseteq G$, as we do in our Theorem A.

Let $g$ be a non-zero element in $L^2(G)$ and $\Lambda=\{\lambda_1,\dots,\lambda_m\}\subseteq G$ be a finite set. Let us consider the functions  $f=\sum_{i=1}^mc_i T_{\lambda_i}\mathcal Rg$ and $f_{\times}=\sum_{i=1}^m\overline{c_i} T_{\lambda_i}\mathcal Rg$, where $c_i\in\C$ and $\mathcal Rg(x)=g(-x)$. Unless we impose some tight restrictions on the coefficients $\{c_i\}$ we see that the functions $f$ and $f_{\times}$ are \textit{not} equal up to a global phase, as this next lemma shows.

\begin{lemma}\label{nonequivalent} In the situation above, $f$ and $f_\times $ are equal up to a global phase if and only if there exists some $\alpha\in \R$ such that $c_j\in e^{i\alpha}\R$ for every $1\leq j \leq m$.  
\end{lemma}
\begin{proof}
    This is straightforward and follows in the same manner as in the first part of \cite[Theorem 3.8]{PR1}, with minor modifications.
\end{proof}

On the other hand, this construction yields functions whose STFT with respect to $g$ have the same absolute value on a large subset of time–frequency shifts.

\begin{lemma}\label{phaseless measurements}
    Let $g\in L^2(G)$ and $\Lambda=\{\lambda_1,\dots,\lambda_m\}\subset G$. If $f=\sum_{i=1}^m c_i T_{\lambda_i}(\mathcal Rg)$ and $f_{\times}=\sum_{i=1}^m\overline{c_i} T_{\lambda_i}\mathcal Rg$ for some $c_i \in\C$, then $|V_gf(x,\xi)|=|V_gf_\times(x,\xi)|$ for all $(x,\xi)\in G\times\Lambda^{\perp}$.
\end{lemma}
\begin{proof}
We will follow the proof of \cite[Theorem 3.1]{PR1} closely. Let $f$ and $\Lambda$ be as above and $(x,\xi)\in G\times\Lambda^{\perp}$. Given that $V_gf(x,\xi)=\langle f, M_\xi T_x g\rangle=\sum_{i=1}^m c_i \langle T_{\la_i} \mathcal{R}g, M_\xi T_xg\rangle$, by standard properties of the time-shift translates we get

    \begin{align*}
      |V_gf(x,\xi)|&=\left|\sum_{i=1}^m c_i\langle \mathcal{R}g,T_{-\la_i} M_\xi T_{x}g\rangle\right|\\
       &=\left|\sum_{i=1}^m c_i \overline{\langle \la_i,\xi \rangle} \langle \mathcal{R}g, M_\xi T_{x-\la_i}g\rangle\right|.
    \end{align*}
Since $\xi\in\Lambda^{\perp}$, this equation reduces to $|V_gf(x,\xi)|=\left|\sum_{i=1}^m c_i\langle \mathcal{R}g, M_\xi T_{x-\la_i}g\rangle\right|$. In the case of $f_\times$, this yields 

    \begin{align} \label{ftimes}
        |V_gf_\times(x,\xi)|&=\left|\sum_{i=1}^m \overline{c_i} \langle \mathcal{R}g, M_\xi T_{x-\la_i}g\rangle\right|=\left|\sum_{i=1}^m c_i  \langle \overline{\mathcal{R}g, M_\xi T_{x-\la_i}g\rangle}\right|.
    \end{align}
\noindent
    Using the change of variable $-z=y-(x-\la_i)$ , these inner products take the form:
    
    \begin{align*}
        \overline{\langle \mathcal{R}g, M_\xi T_{x-\la_i}g\rangle}&= \int_G \overline{g(-y)} \langle y,\xi\rangle g(y-(x-\la_i))\dG (y) \\
        &= \int_G\overline{g(z-(x-\la_i))} \langle -z+(x-\la_i),\xi\rangle g(-z)\dG(z)\\
        &= \langle x-\la_i,\xi\rangle \int_G \overline{g(z-(x-\la_i))}\overline{\langle z,\xi\rangle} g(-z)\dG(z)\\
        &=\langle x,\xi\rangle \langle \mathcal{R}g,M_\xi T_{x-\la_i}g\rangle.
     \end{align*}
\noindent
Replacing this in \eqref{ftimes}, we get  

\begin{align*}
    |V_gf_\times(x,\xi)|&=\left|\sum_{i=1}^m c_i \langle x,\xi\rangle \langle \mathcal{R}g,M_\xi T_{x-\la_i}g\rangle\right|\\
    &= \left|\sum_{i=1}^m c_i \langle \mathcal{R}g,M_\xi T_{x-\la_i}g\rangle\right|=|V_gf(x,\xi)|,
\end{align*}
concluding the proof.
\end{proof}

\begin{theorem} \label{teo:non-uniq}
Let $G$ be an LCA group, and $g\in L^2(G)$ an arbitrary window function. 
Let $\Gamma \subseteq \widehat G$ be a subset contained in a proper closed subgroup of $\widehat G$. 
Then there exist $f_1,f_2\in L^2(G)$ such that 
\begin{align}\label{spectograms}
|V_g f_1(x,\xi)| \;=\; |V_g f_2(x,\xi)|,
\qquad \forall (x,\xi)\in G\times \Gamma,
\end{align}
but $f_1$ and $f_2$ are not equal up to a global phase.
\end{theorem}

\begin{proof}
Since the closed subgroup generated by $\Gamma$ is a proper subset of $\wh{G}$, we have $\Gamma^{\perp}\neq\{0\}$. Thus, we can choose a finite subset $\Lambda=\{\la_1,\dots,\la_m\}\subseteq\Gamma^{\perp}$ with at least two different elements.

 Let us fix a finite sequence $\{c_i\}_{i=1}^m\subseteq\C$, not contained in $e^{i\alpha}\R$ for any $\alpha\in\R$, and consider $f_1=\sum_{i=1}^mc_i T_{\la_i}(\mathcal Rg)$ and $f_2=(f_1)_{\times}$. Since $\Gamma\subseteq\Lambda^{\perp}$, Lemma \ref{phaseless measurements} ensures
$$|V_gf_1(x,\xi)|=|V_gf_2(x,\xi)| \hspace{.3cm} \mbox{for all}\hspace{.1cm} (x,\xi)\in G\times\Gamma.$$
Finally, the hypothesis on the $c_i$'s allows us to apply Lemma \ref{nonequivalent} to conclude.    
\end{proof}

\begin{example} For a prime number $p$ denote by $\mathbb F_p$ the field of order $p$. Let $G$ be the group associated to the additive structure of the field $\mathbb F_p((t))$ of formal Laurent series in variable $t$:
$$\mathbb F_p((t))=\left\{ \sum_{n\geq n_0} a_nt^n: \, n_0\in\Z\,\,\mathrm{and}\,\, a_n\in\mathbb F_p \right\}.$$
Being a local field, $G$ is a self-dual LCA group with respect to the topology induced by the $t$-adic valuation (see for instance \cite[Proposition 7.1]{ramakrishnan2013fourier} ). The set of formal power series
$$\mathbb F_p[[t]]=\left\{ \sum_{n\geq 0} a_nt^n: \, a_n\in\mathbb F_p \right\}$$
is a compact open subgroup, and the sets $t^n\mathbb F_p[[t]]$ determine a fundamental system of neighborhoods of $0$. On the other hand,
$$ D=\left\{\sum_{n=-n_0}^{-1}a_nt^n: n_0\in\N, a_n\in\mathbb F_p\right\}$$
is a lattice such that
$\mathbb F_p((t))=\mathbb F_p[[t]]\oplus D$ and therefore any uniformly discrete set $\Gamma\subseteq D$ will satisfy the hypothesis of Theorem \ref{teo:non-uniq}.  
\end{example}

    In the case where $G=\mathbb R^d$, our hypotheses on $\Gamma$ are strictly weaker than being a lattice: $\Gamma$ does not need to be discrete, and we are requiring the quotient $\mathbb R^d/ \overline{\langle \Gamma \rangle}$ to be non-trivial rather than compact.
A more general concept than that of lattices in LCA groups is given by uniformly discrete subgroups. Of course, a uniformly discrete subgroup $\Gamma\subseteq \widehat{G}$ will satisfy the hypothesis of Theorem \ref{teo:non-uniq} (unless $\Gamma=\widehat{G}$).
For uniformly discrete \emph{subsets}, on the other hand, the situation is more involved. 
It is not always the case that a uniformly discrete subset $\Gamma\subseteq \widehat{G}$ satisfies the strict inclusion $\overline{\langle \Gamma \rangle }\subsetneq \widehat{G}$. This fails, for instance, when $\Gamma$ is a section of the quotient $G/H$ for $G=\mathbb Q_p$ and $H=\mathbb Z_p$, as the reader can easily check. This naturally raises the following

\begin{question}
    When $\overline{\langle \Gamma \rangle }=\widehat G$, for $\Gamma$ a uniformly discrete set, is there a natural condition that guarantees the existence of a window function $g$ such that every element in $L^2(G)$ is determined (up to a global phase) by the values of its short-time Fourier transform on $G\times\Gamma$, or perhaps even a smaller set?  
\end{question}

It is worth pointing out that a similar question, in the particular case of $G=\mathbb R^d$, was formulated in \cite[Problem 4.2]{PR4}.

 \section*{Acknowledgements}

This paper was carried on within the project ``Matem\'aticas en el Cono Sur 2", a collaborative workshop focused on tackling unsolved mathematical problems through teamwork.  The initiative encourages participation from all genders, with research groups led by women to promote female leadership and visibility in the field.
We thank the organizers for their efforts, their support  and for the hospitality extended to us during the meeting held in Montevideo, Uruguay, in February 2024.

We would like to thank Felipe Marceca for helpful conversations, which inspired us to use probabilistic tools.

We would also like to thank the referee for his/her comments, which helped us to improve the exposition, and in particular for his/her question that gave rise to the content of Section 5. 

N. A. and D. C. were supported by  PIP 11220200102366CO (CONICET) and UBACyT 20020220300242BA (UBA).
R. N. and V. P. were supported  PICT 2019-03968 (ANPCyT) and  PIP 11220210100087 (CONICET). S. V. was supported by EPSRC - United Kingdom.

\end{document}